\documentclass[11pt]{article}

\usepackage{amsmath, amssymb, amsthm} 
\usepackage{aliascnt}

\usepackage[colorlinks=true, allcolors=magenta,linkcolor=blue, citecolor=magenta]{hyperref}

\usepackage{graphicx} 
\usepackage{color, comment, mathtools, cleveref, enumitem}
\usepackage{xcolor}

\usepackage[margin=1.2in]{geometry}

\usepackage{float}

\usepackage{bm} 

\newtheorem{thm}{Theorem}[section]
\crefname{thm}{Theorem}{Theorems}

\newaliascnt{prop}{thm}
\newtheorem{prop}[prop]{Proposition}
\aliascntresetthe{prop}
\crefname{prop}{Proposition}{Propositions}

\newaliascnt{lemma}{thm}
\newtheorem{lemma}[lemma]{Lemma}
\aliascntresetthe{lemma}
\crefname{lemma}{Lemma}{Lemmas}

\newaliascnt{cor}{thm}

\aliascntresetthe{cor}
\crefname{cor}{Corollary}{Corollaries}

\newaliascnt{mydef}{thm}

\aliascntresetthe{mydef}
\crefname{mydef}{Definition}{Definitions}

\newaliascnt{ex}{thm}

\aliascntresetthe{ex}
\crefname{ex}{Example}{Examples}

\newaliascnt{exc}{thm}

\aliascntresetthe{exc}
\crefname{exc}{Exercise}{Exercises}

\newaliascnt{remark}{thm}
\newtheorem{remark}[remark]{Remark}
\aliascntresetthe{remark}
\crefname{remark}{Remark}{Remarks}

\numberwithin{equation}{section} 

\DeclareMathOperator{\supp}{supp}

\renewcommand{\det}{\mathrm{det}}

\newcommand{\V}{{\mathcal{V}}}

\newcommand{\p}{{\partial}}

\renewcommand{\d}{\delta}

\newcommand{\R}{{\mathbb{R}}}

\newcommand{\g}{{\mathsf{g}}}

\newcommand{\D}{\Delta}

\newcommand{\Id}{\text{Id}}

\newcommand{\E}{{\mathbf{E}}}

\let\div\relax
\DeclareMathOperator{\div}{\mathsf{div}}

\def\xxint#1#2#3{{\setbox0=\hbox{$#1{#2#3}{\int}$ }
\vcenter{\hbox{$#2#3$ }}\kern-.6\wd0}}

\def \hal{\frac{1}{2}}
\def\({\left(}
\def\){\right)}

\def\nab{\nabla}

\usepackage{fancyhdr}

\title{Gradient descent at the Edge of Stability: free energy model and kinetic description of the two-layer network}

\author{Antonin Chodron de Courcel
\thanks{Ecole Normale Supérieure, CNRS, 45 rue d'Ulm, 75005 Paris, France; Email: decourcel@ihes.fr} 
}

\date{June 2026}

\begin{document}

\maketitle

\begin{abstract}
    We study the dynamics of gradient descent in the Edge of Stability regime, where the learning rate is large enough to induce persistent oscillations in the loss and the sharpness. We propose a continuous-time effective model that tracks the evolution of the average trajectory coupled with the time-averaged covariance of its fast oscillations. Our analysis reveals that the natural quantity to monitor in such unstable regimes is an effective free energy, which combines the original risk functional with a curvature-related ``entropic'' term. Our model allows us to track the envelope of the oscillations even in situations where its dynamics evolve on similar timescales as the averaged weights. Otherwise stated, we can track the spikes that occur during the training of some neural network architectures.  
    
    For wide two-layer neural networks optimized under stable non-vanishing oscillations, we derive a mean-field limit that results in a novel kinetic equation describing the joint distribution of weights and their fluctuations. We show that this equation can be interpreted as a Wasserstein-2 gradient flow of a macroscopic free energy. 
    
    Finally, we provide numerical evidence on matrix factorization and deep learning tasks (CIFAR-10) to demonstrate the model's accuracy in capturing the envelope of the oscillations and the predictive power of the effective free energy.
\end{abstract}

\tableofcontents

\section{Introduction}

The derivation of effective macroscopic models from underlying unstable or highly oscillatory microscopic dynamics is a foundational theme in mathematical physics and fluid mechanics. Whether in the kinetic theory of gases, where the Boltzmann equation describes the evolution of a density from particle collisions, or in the study of turbulence, where coarse-grained models capture the energy cascade across scales, the goal is often to find an effective description that remains valid despite the complex instabilities.

Modern machine learning models involve minimizing highly non-convex risk functionals over high-dimensional parameter spaces. 
In this context, a similar challenge has emerged with the discovery of the \emph{Edge of Stability} phenomenon \cite{cohen2021eos}: neural networks are often trained with learning rates that are nominally too large for stability, leading to persistent oscillations in the loss and the top eigenvalue of the Hessian (called the \emph{sharpness}). 
The optimization algorithm does not merely find the nearest minimum given by gradient flow, but rather follows a path influenced by the \emph{implicit bias} of its discrete steps.\footnote{Leaving aside the question of added stochasticity and adaptive step sizes sur as the Adam or RMSprop algorithms.} It is believed that this bias often directs the optimizer towards ``flatter'' minima that exhibit superior generalization performance on unseen data, a property that has been linked to the sharpness of the local landscape \cite{hochreiter1997flat, keskar2017largeminima,foret2021sharpnessaware}. The Edge of Stability regime is a prime example of such bias, where the optimizer's inability to remain in sharp regions forces a drift towards flatter regions—where the top eigenvalue of the Hessian $\lambda_{\max}$ is smaller—thereby potentially finding solutions with better generalization properties.

\begin{figure}[h]
    \center
    \includegraphics[scale=0.3]{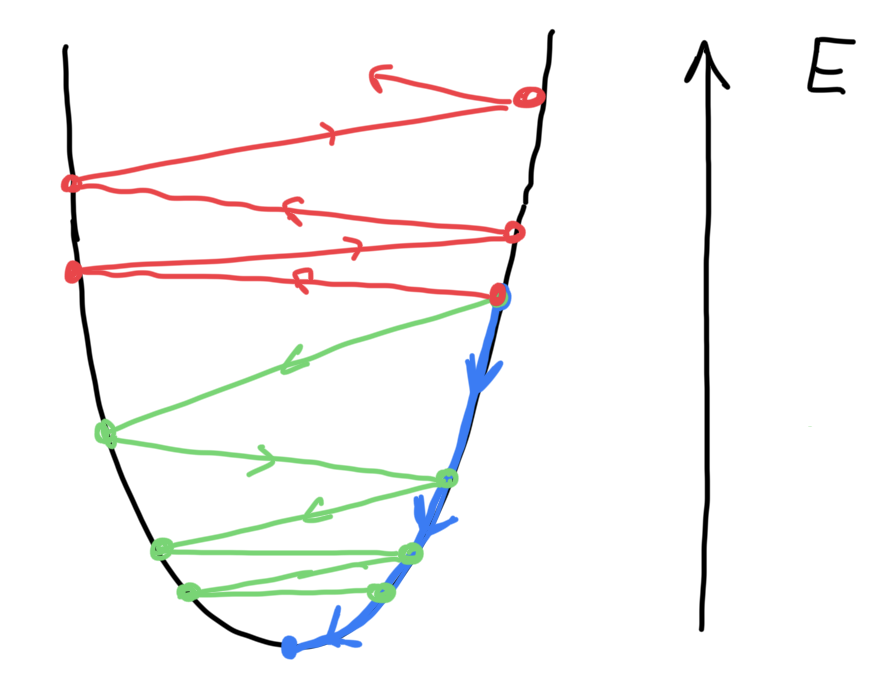}
    \caption{Depending on the learning rate (in ascending order: blue, green, red), the trajectories of the gradient descent may escape excessively sharp minima.}
\end{figure}

\subsection{Classical gradient descent and gradient flow}

The standard algorithm for minimizing a risk functional $E(\theta)$ is gradient descent:
\begin{eqnarray}\label{eq:GD_intro}
    \tilde \theta_{k+1} = \tilde \theta_k - \eta \nab E(\tilde \theta_k),
\end{eqnarray}
where $\eta > 0$ is the learning rate. In the limit of infinitesimal learning rate ($\eta \to 0$), the discrete steps converge to the continuous-time \emph{gradient flow}:
\begin{eqnarray}
    \frac{d\tilde \theta}{dt} = -\nab E(\tilde \theta).
\end{eqnarray}
While gradient flow is a powerful tool for analysis, it fails to capture the discrete-time effects that dominate modern training regimes, especially when $\eta$ is chosen to be large to accelerate convergence or improve generalization.

\subsection{The descent lemma and the $2/\eta$ threshold}

A classical result in optimization, the \emph{descent lemma} \cite{bertsekas1999nonlinear, nesterov2004introductory}, states that if the gradient $\nab E$ is $L$-Lipschitz, then:
\begin{equation}
    E(\tilde \theta_{k+1}) \le E(\tilde \theta_k) - \eta \left( 1 - \frac{\eta L}{2} \right) \|\nab E(\tilde \theta_k)\|^2.
\end{equation}
This implies that the loss is guaranteed to decrease as long as $\eta < 2/L$. In the neighborhood of a point $\tilde \theta$, the Lipschitz constant $L$ is effectively the largest eigenvalue of the Hessian $\nab^2 E(\tilde \theta)$, denoted $\lambda_{\max}$. Thus, the stability condition is $\lambda_{\max} < 2/\eta$. 

However, empirical evidence suggests that deep networks often operate in the regime where $\lambda_{\max} \approx 2/\eta$, even temporarily exceeding it. 
In this Edge of Stability regime, the loss oscillates but continues to decrease on average. 
This self-stabilization suggests that the optimizer follows an implicit trajectory that ``hugs" the stability boundary.

\section{Related works}

\subsection{Edge of Stability}

The Edge of Stability phenomenon was systematically characterized by Cohen et al. \cite{cohen2021eos}, who observed that $\lambda_{\max}$ often hovers just above $2/\eta$ during training. This challenged the classical view that $\lambda_{\max}$ should stay below the stability threshold. Subsequent works have attempted to explain this via various mechanisms, such as the ``progressive sharpening" of the landscape (the numerically observed tendency of gradient descent to move towards sharper regions) and the eventual stabilization through oscillations. 
Damian et al. \cite{damian2023self} provided a theoretical framework for self-stabilization in simpler models, showing how large learning rates can induce a drift towards flatter regions. Further intuition comes from simplified models \cite{zhu2022minimalist, yoo2025understanding, kalra2025universal, ahn2022learning}, which reproduce such Edge of Stability-like behavior in toy problem settings.

Recently, the \emph{Central Flow} framework \cite{cohen2024centralflow} proposed a continuous-time model for the time-averaged path of gradient descent in the Edge of Stability regime. In this framework, the discrete trajectory $\tilde \theta_k$ is decomposed into a slow central path ${\theta}_k$ and fast oscillations $\d\theta_k$. The central path is shown to satisfy an effective equation of the form:
\begin{equation}
    \frac{d\theta}{dt} = -\eta\nabla E(\theta) - \frac{\eta^2}{2} \nabla \nabla^2 E(\theta) : \Sigma + \text{higher order terms},
\end{equation}
where $\Sigma = \E[\d\theta_k \d\theta_k^T]$ represents the time-averaged covariance of the oscillations. 

This supplementary term captures the self-stabilization effect: when the Hessian becomes too sharp ($\lambda_{\max} > 2/\eta$), the curvature term pushes the system back towards flatter regions. Nevertheless, it does not explain why the sharpness would increase again after stabilization. In the Central Flow framework, the covariance $\Sigma$ is hard-coded to satisfy 
\begin{align*}
    \text{Im}\, \Sigma \subset \text{Ker}\, (\nab^2 E(\theta) - 2/\eta \, \text{Id}) 
\end{align*}
at each time, which assumes the instantaneous relaxation to an equilibrium value that balances the ``progressive sharpening" with the curvature-related drift. 

Building on this beautiful theory, we propose another model which gets rid of the assumption of instantaneous relaxation and allows for a dynamic evolution of $\Sigma$. Specifically, the evolution of $\Sigma$ is itself an equation which is coupled to the evolution of $\theta$. This allows us to capture the oscillations around the stability boundary and to explain the increase of sharpness after stabilization. 
Our model can represent the highly oscillating dynamics of the Edge of Stability regime on small timescales, and provides a natural free energy functional that is the right physical quantity to compare with the loss along the trajectory. Our analysis is closer to a slow-fast system,\footnote{We note that this analogy can already be found in \cite{cohen2024centralflow}. } where the fast variable $\d\theta$ relaxes towards an equilibrium on a short timescale while its envelope $\Sigma$ can evolve on a longer timescale, not necessarily faster than $\theta$.
\subsection{Two-layer network in the mean-field regime}

For two-layer neural networks, the infinite-width limit leads to the \emph{mean-field regime}, where the evolution of the weights can be described by a partial differential equation (PDE) for the distribution of neurons. This framework, pioneered by Chizat and Bach \cite{chizat2018global} (see also \cite{rotskoff2018parameters, mei2018mean, mezard2009information}), interprets the optimization dynamics for small learning rates and large width as a gradient flow in the Wasserstein space of measures. This parametrization allows for feature learning, as neurons can move significantly from their initialization \cite{yang21tensor4, chizat2019lazy}. More precisely, writing the output of the network, whose weights are $(\theta_1,\ldots,\theta_N)\in (\R^d)^N$, as
\begin{equation*}
    f(\theta_1,\ldots,\theta_N, x) = \frac{1}{N} \sum_{i=1}^N \phi(\theta_i, x) = \int \phi(\theta, x) d\rho_N(\theta),\quad \rho_N = \frac{1}{N}\sum_{i=1}^N \delta_{\theta_i},
\end{equation*}
we can see the optimization dynamics as $N\to\infty$ as the gradient flow of the risk functional $$E(\rho) = \E_{(x,y)\sim \mathcal{D}}\bigg[\ell\bigg(y, \int \phi(\theta,x) d\rho(\theta)\bigg)\bigg]$$ in the Wasserstein space $\mathcal{P}_2(\R^d)$, where $\rho$ is the distribution of neurons and $\mathcal{D}$ is the data distribution.
While the standard mean-field limit assumes small learning rates (or gradient flow), our work tries to extend this to a stable but oscillating regime by deriving a kinetic equation that includes both the interaction of neurons and the effect of discretization-induced fluctuations.

In particular, we study the case where the energy functional corresponds to the \emph{Maximum Mean Discrepancy} (MMD) \cite{gretton2012kernel}, a common metric in generative modeling \cite{dziugaite2015training} and particle-based optimization \cite{arbel2019maximum}. 

\section{Main results}

\subsection{The continous-time effective model}

Throughout the paper, we will denote $\tilde \theta$ the true gradient descent dynamics
\begin{eqnarray}\label{eq:GD}
    \tilde \theta_{k+1} = \tilde \theta_k - \eta \nab E(\tilde \theta_k).
\end{eqnarray}
Since we are interested in the Edge of Stability regime, we introduce the following ansatz
\begin{eqnarray}
    \tilde \theta_k = \theta_k + \sqrt{\eta} \d\theta_k,
\end{eqnarray}
for some small mean-zero $\d\theta_k$ which shall be thought of as a ``fast'' variable,\footnote{The scaling of $\d\theta$ will be debated in the Appendix.} as opposed to the slow variable $\theta$.
Averaging over the fast oscillations,\footnote{Which amounts to an ensemble average under ergodicity conditions, which are far from obvious in this generic context.} we denote
\begin{eqnarray}
    \Sigma_k := \E(\d\theta_k\d\theta_k^T).
\end{eqnarray}
The process $\Sigma_k$ can therefore be thought of as the \emph{slow} envelope of the fast process $\d\theta_k$. Note however that it is unclear at this stage if this enveloppe has a dynamic which is faster or of the same order as $\theta_k$. 

After some calculations detailed in the Appendix, we arrive at 
\begin{align*}
    \theta_{k+1} &= \theta_k - \eta \nab E(\theta_k) - \frac{\eta^2}{2}\nab \nab^2 E(\theta_k) : \Sigma_k, \\
    \Sigma_{k+1} &= \Sigma_k - \eta (\Sigma_k \nab^2 E(\theta_k) + \nab^2 E(\theta_k)\Sigma_k) + \eta^2 \nab^2 E(\theta_k)\Sigma_k\nab^2 E(\theta_k),
\end{align*}
under the assumtion that the oscillations $\d\theta_k$ are small.

We then assume that the evolution of $(\theta,\Sigma)$ is slow on the algorithmic timescale $\D t= 1$, so that these equations can be seen as Euler steps of the following continous-time model, which is a main result of this article:
\begin{equation}\label{eq:main}
    \begin{cases}
        \displaystyle \frac{d\theta}{dt} = - \eta \nabla E(\theta) - \frac12 \eta^2 \nab \nab^2 E(\theta ) : \Sigma, \\[10pt]
        \displaystyle \frac{d\Sigma}{dt} = -\eta (\Sigma\nab^2E(\theta) +  \nab^2E(\theta) \Sigma)+ \eta^2 \nab^2E(\theta) \Sigma \nab^2 E(\theta).
    \end{cases}
\end{equation}
\begin{remark}[Recovering gradient flow as $\eta\to 0$]
    Rescaling time by a factor $\eta$ setting $\tau = t \eta$ and taking the limit $\eta\to 0$, we recover the classical gradient flow dynamic
    \begin{equation*}
        \frac{d\theta}{d\tau} = -\nab E(\theta).
    \end{equation*}
\end{remark}
\begin{remark}[Large time asymptotics of flat landscapes]\label{rem:flat}
    Suppose the landscape is governed by a flatness parameter $\lambda$, so that the equation reads
    \begin{equation}
    \begin{cases}
        \displaystyle \frac{d\theta}{dt} = - \eta\lambda \nabla E(\theta) - \frac12 \eta^2 \lambda \nab \nab^2 E(\theta ) : \Sigma, \\[10pt]
        \displaystyle \frac{d\Sigma}{dt} = -\eta\lambda (\Sigma\nab^2E(\theta) +  \nab^2E(\theta) \Sigma)+ (\eta \lambda)^2 \nab^2E(\theta) \Sigma \nab^2 E(\theta).
    \end{cases}
\end{equation}
In the limit $\lambda\to 0$ while looking at times of order $1/(\eta\lambda)$, the equation can be rewritten in the coarse-grained time variable $\tau = t/(\eta\lambda)$ as
\begin{equation}\label{eq:flat}
    \begin{cases}
        \displaystyle \frac{d\theta}{d\tau} = -  \nabla E(\theta) - \frac12 \eta \nab \nab^2 E(\theta ) : \Sigma, \\[10pt]
        \displaystyle \frac{d\Sigma}{d\tau} = - (\Sigma\nab^2E(\theta) +  \nab^2E(\theta) \Sigma),
    \end{cases}
\end{equation}
and we notice that the quadratic term in the evolution of $\Sigma$ vanishes. The evolution for $\Sigma$ is exactly the gradient flow for the Bures metric on the space of covariance matrices, associated to the linear energy functional $\Sigma \mapsto \frac12 \text{Tr}(\Sigma \nabla^2 E(\theta))$. 

We also notice that the equation \eqref{eq:flat} preserves the rank of the matrix $\Sigma$. In particular, since $\Sigma$ is symmetric and positice semidefinite, it can be written as $\Sigma = v\otimes v$ for some matrix $v\in \R^{d\times r}$, where $r = \text{rank } \Sigma$, and the equation can be written as
\begin{equation}
    \begin{cases}
        \displaystyle \frac{d\theta}{d\tau} = -  \nabla E(\theta) - \frac12 \eta \nab \nab^2 E(\theta ) : v \otimes v, \\[10pt]
        \displaystyle \frac{d v}{d\tau} = - \nab^2E(\theta) v.
    \end{cases}
\end{equation} 
\end{remark}

\subsection{Effective free energy}

A natural quantity that emerges from our analysis is what we call the (effective) free energy
\begin{equation}
    F(\theta,\Sigma) := E(\theta) + \frac{\eta}{2} \nab^2 E(\theta) :\Sigma.
\end{equation}
\begin{figure}[h]
    \center
    \includegraphics[scale=0.3]{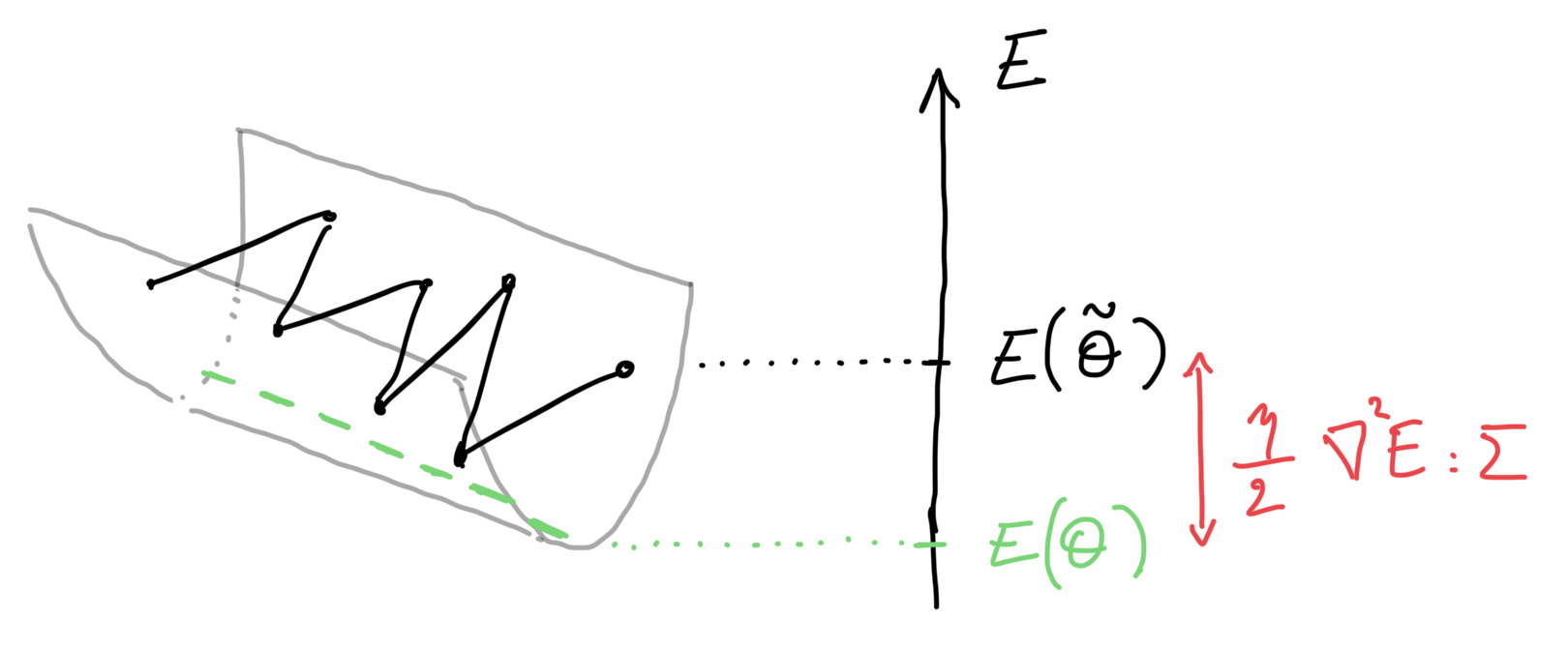}
    \caption{The effective free energy}
\end{figure}

Beyond aesthetics, this observation leads us to understand that the right quantity to compare with the gradient descent evolution $E(\tilde \theta_k)$ is not the loss $E(\theta_k)$ but the free energy itself (see the experiments).

It is natural to analyse the stationary solution to \eqref{eq:main}. The important fact being that there is more stationary solutions than pure saddle points of the energy. Indeed, any point $(\theta, \Sigma)$ such that
\begin{align*}
    &\nab_\theta F(\theta,\Sigma)= 0,
     & \nab^2E(\theta) \Sigma + \Sigma \nab^2 E(\theta) - \eta \nab^2 E(\theta) \Sigma \nab^2 E(\theta)= 0
\end{align*}
is a stationary point. This includes persistent oscillations at the Edge of Stability as well as saddle points of the free energy. Furthermore, we have the following theoretical result, whose proof is deferred to the Appendix:
\begin{prop}\label{prop:one-particle}
    Let $(\theta_0,\Sigma_0)\in \R^d \times \R^{d\times d}$ and $E\in C^4_b(\R^d)$. There exists a maximal time $T_{max}>0$ and a curve $(\theta_t, \Sigma_t)$ which solves \eqref{eq:main} on $[0,T)$, for any $T<T_{max}$. 
    Assume that $\|\nab^2 E\|_{\infty} \le 2/\eta$. Then, 
    \begin{equation*}
        \frac{d}{dt}F(\theta_t, \Sigma_t) = - \eta \|\nab_\theta F(\theta_t,\Sigma_t) \|^2 \le 0.
    \end{equation*}
\end{prop}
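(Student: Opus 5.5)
The plan is to treat existence by Cauchy--Lipschitz and the energy identity by a direct chain-rule computation. One structural fact is needed along the way: $\Sigma_t$ stays symmetric positive semidefinite. This is implicit in the statement, since $\Sigma$ is a covariance, so I will assume $\Sigma_0$ symmetric and PSD. For a general $\Sigma_0\in\R^{d\times d}$ the sign of $\frac{d}{dt}F$ cannot be controlled, so this assumption is genuinely needed for the inequality.

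\emph{Step 1: existence.} Write the right-hand side of \eqref{eq:main} as a vector field $V(\theta,\Sigma)$. Since $E\in C^4_b$, the maps $\nab E$, $\nab^2E$ and $\nab\nab^2E$ are $C^1$ with bounded derivatives, and $V$ is polynomial (at most quadratic) in $\Sigma$ with these coefficients. Hence $V$ is locally Lipschitz, and Cauchy--Lipschitz yields a unique maximal solution on $[0,T_{max})$. In fact $V$ is affine in $\Sigma$ with bounded coefficients, so a Grönwall argument should even give $T_{max}=\infty$, but the statement only requires a maximal time.

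\emph{Step 2: the chain rule.} Set $H=\nab^2E(\theta_t)$. Then
\begin{equation*}
\nab_\theta F=\nab E+\tfrac{\eta}{2}\nab\nab^2E:\Sigma, \qquad \nab_\Sigma F=\tfrac{\eta}{2}H.
\end{equation*}
The $\theta$-equation is exactly $\dot\theta=-\eta\nab_\theta F$, so $\nab_\theta F\cdot\dot\theta=-\eta\|\nab_\theta F\|^2$. For the $\Sigma$-part, using $H$ symmetric and the cyclicity of the trace,
\begin{equation*}
\tfrac{\eta}{2}H:\dot\Sigma=\tfrac{\eta^2}{2}\big(-2\,\mathrm{Tr}(H\Sigma H)+\eta\,\mathrm{Tr}(H^2\Sigma H)\big)=\tfrac{\eta^2}{2}\,\mathrm{Tr}\big(\Sigma^{1/2}H(\eta H-2\,\Id)H\Sigma^{1/2}\big).
\end{equation*}
The matrix $H(\eta H-2\Id)H$ has eigenvalues $\lambda^2(\eta\lambda-2)$, where $\lambda$ ranges over the eigenvalues of $H$. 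These are all $\le 0$ as soon as $\lambda\le 2/\eta$, which is guaranteed by $\|\nab^2E\|_\infty\le 2/\eta$; negative curvature is harmless. So this term is nonpositive, provided $\Sigma_t\succeq0$.

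Two remarks on this computation. First, it shows that the displayed equality in the statement should really be read as
\begin{equation*}
\frac{d}{dt}F=-\eta\|\nab_\theta F\|^2+\tfrac{\eta^2}{2}\mathrm{Tr}\big(\Sigma^{1/2}H(\eta H-2\Id)H\Sigma^{1/2}\big)\le -\eta\|\nab_\theta F\|^2\le 0.
\end{equation*}
Second, equality holds exactly when $\Sigma$ is supported on $\mathrm{Ker}\,H\oplus\mathrm{Ker}(H-2/\eta)$, which matches the stationary points discussed before the proposition.

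\emph{Step 3: propagation of positivity, the main obstacle.} With $A_t=\Id-\eta H_t$, the $\Sigma$-equation reads $\dot\Sigma=A_t\Sigma A_t^T-\Sigma$. Symmetry is preserved because the right-hand side maps symmetric matrices to symmetric matrices and solutions are unique. For positivity I use Duhamel's formula
\begin{equation*}
\Sigma_t=e^{-t}\Sigma_0+\int_0^t e^{-(t-s)}A_s\Sigma_sA_s^T\,ds.
\end{equation*}
The map $\Sigma\mapsto A\Sigma A^T$ preserves the PSD cone. Running Picard iterations of this integral equation from a PSD starting point therefore stays in the closed PSD cone, and the iterates converge to the unique solution, so $\Sigma_t\succeq0$ on $[0,T_{max})$.

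Alternatively, one can write $\Sigma_t=\Phi_t\Sigma_0\Phi_t^T+\int_0^t\Phi_{t,s}\,\eta^2H_s\Sigma_sH_s\,\Phi_{t,s}^T\,ds$, with $\Phi$ the resolvent of $\dot X=-\eta H X$, and conclude the same way. Either route needs only the continuity of $t\mapsto H_t$ along the solution. Combining Steps 2 and 3 gives the claim.
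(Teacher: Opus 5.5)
Your proposal is correct and follows the paper's route: Cauchy--Lipschitz for existence, then the same chain-rule computation yielding $\frac{d}{dt}F = -\eta\|\nab_\theta F\|^2 - \frac{\eta^2}{2}\big[\nab^2E\,(2\Id-\eta\nab^2E)\,\nab^2E\big]:\Sigma$. Your Step 3 (propagation of $\Sigma_t\succeq 0$ by Duhamel/Picard with $A_t=\Id-\eta\nab^2E(\theta_t)$) and your remark that the displayed equality should read $\frac{d}{dt}F\le-\eta\|\nab_\theta F\|^2$ fill in points the paper leaves implicit: its proof stops at this identity and concludes without assuming $\Sigma_0\succeq 0$ or checking that $\Sigma_t$ stays positive semidefinite.
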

\begin{remark}
    If the landscape $E$ is such the following \emph{generalized} Polyak-Lojiacewicz inequality holds along the flow, i.e. 
    \begin{equation*}
        \|\nab_\theta F(\theta_t,\Sigma_t) \|^2 \ge \frac{\lambda}{\eta}  F(\theta_t,\Sigma_t),
    \end{equation*} 
    then we have the global convergence result
    \begin{equation*}
        F(\theta_t,\Sigma_t)\le F(\theta_0,\Sigma_0) e^{-\lambda t}.
    \end{equation*}
\end{remark}

\subsection{The two-layer network in the mean-field regime}

We now apply the model \eqref{eq:main} to the optimization dynamics of a two-layer neural network in the mean-field regime. Specifically, 
we consider $N$ neurons with weights $(\theta_i)_{i=1}^N$ represented by the empirical measure
\begin{equation*}
    \rho_N := \frac{1}{N}\sum_{i=1}^N \d_{\theta_i},
\end{equation*} 
and we apply our model to tasks of the form 
\begin{equation}
    E(\rho) = \E_{(x,y) \sim \mathcal{D}} \left[ \ell\left(y, \int \phi(\theta, x) d\rho(\theta)\right) \right],
\end{equation}
where $\phi(\theta, x)$ denotes the output of a single neuron with weights $\theta$ and input $x$, and $\mathcal{D}$ is the data distribution. 
The dynamics for the weights and their fluctuations $\Sigma_{i,j}$ on timescales of order $N/\eta$ are governed by:
\begin{equation}
    \begin{cases}
    \displaystyle \frac{d\theta_i}{dt} = -  N \nab_{\theta_i} E(\rho_N) - \hal \eta N \nab_{\theta_i} \nab^2 E(\rho_N) : \Sigma, \\[10pt]
    \displaystyle\frac{d\Sigma_{i,j}}{dt} = -  N (\Sigma  \nab^2 E(\rho_N))_{i,j} -  N (\nab^2 E(\rho_N)  \Sigma)_{i,j} + \eta N (\nab^2 E(\rho_N) \Sigma \nab^2 E(\rho_N))_{i,j}.
    \end{cases}
\end{equation}
In the limit $N\to\infty$ (and for kernels $\g$ with three bounded derivatives $C^3_b$), the quadratic term (which is of order $1/N$) vanishes, and we are actually close to the equation
\begin{equation}
    \begin{cases}
    \displaystyle \frac{d\theta_i}{dt} = -  N \nab_{\theta_i} E(\rho_N) - \hal \eta N \nab_{\theta_i} \nab^2 E(\rho_N) : \Sigma, \\[10pt]
    \displaystyle\frac{d\Sigma_{i,j}}{dt} = -  N (\Sigma  \nab^2 E(\rho_N))_{i,j} -  N (\nab^2 E(\rho_N)  \Sigma)_{i,j} .
    \end{cases}
\end{equation}
We thus recover the regime described in \cref{rem:flat}. In particular, we make the hypothesis that the initial rank of $\Sigma$ is constant in the limit $N\to\infty$, which seems supported by numerical simulations. Therefore, we have $\Sigma = V \otimes V$ where $V = (v_1,\dots,v_N) \in (\R^{d \times r})^N$, $v_i\in \R^{d\times r}$. 

Instead of tracking the evolution of all the weights and their covariance, we introduce the empirical measure
\begin{equation}
    f_N(t,d\theta, dv) := \frac{1}{N}\sum_{i=1}^N \d_{\theta_i, v_i}(d\theta,dv).
\end{equation}

In the mean-field limit $N\to\infty$, the evolution of the density is given by a closed form equation, whose phase space is $(\theta,v)\in \R^d\times \R^{d\times r}$. To fix ideas, we can write the equation in the case of a least square loss $\ell(r) = r^2/2$, in the teacher-student framework where $ y = \int \phi(\theta, x) \, d\mu(\theta)$ for some target distribution $\mu \in \mathcal{P}(\R^d)$. In the reproducible kernel Hilbert space scenario where we have a translation invariant kernel $\g: \R^d\to \R$ such that $ \g(\theta-\theta') = \E_{\mathcal{D}}\big[ \phi(\theta,x)\phi(\theta',x)\big]$,
the risk functional now reads
$$E(\rho) = \hal \int \g\ast (\rho - \mu) \, d(\rho-\mu),$$ 
which is nothing but the Maximum Mean Discrepancy between $\rho$ and $\mu$. Abusing notation by denoting $\mu\in \mathcal{P}(\R^d \times \R^{d\times r})$ the measure $\mu(d\theta)\otimes \d_{0}(d v)$, we therefore obtain the closed-form equation
\begin{equation}\label{eq:MMD}
    \begin{cases}
        &\displaystyle \frac{\p f}{\p t} - \div_\theta (f\nab\g\ast(\rho - \mu) )- \frac{\eta}{2} \div_\theta (f\nab_\theta (\nab^2 \g : v\otimes v) \ast(f - \mu) ) \\[5pt]
       & \displaystyle-  \div_v (f(\nab^2\g \, v)\ast (f-\mu)) = 0, \\[10pt]
     &\displaystyle\rho := \int f\, dv.
    \end{cases}
\end{equation}
\begin{remark}
    We insist that the derivation of such an equation is not restricted to the Maximum Mean Discrepancy case studied here. 
\end{remark}

From the point of view of analysis, this equation is the gradient flow for the weighted Wasserstein-2 distance $W_{2,\eta}$, defined through the metric $d\theta^2 + \eta dv^2$, associated to the macroscopic free energy
\begin{equation}
\mathcal{F}(f):= \frac{1}{2}\int\g\ast (\rho-\mu )\, d(\rho-\mu) + \frac{\eta}{4}\int (\nab^2\g : v\otimes v) \ast (f-\mu )\, d(f-\mu).
\end{equation}
Formally, the equation \eqref{eq:MMD} is equivalent to 
\begin{equation}\label{eq:MMDFlow}
    f(t) = Z(t,\cdot ;\, f)\#f_0,
\end{equation}
where $Z(t,\theta, v) = (\Theta(t,\theta, v), V(t,\theta, v))$ is the flow map
\begin{equation}
\begin{cases}
    \displaystyle \frac{d\Theta}{ds} =\V_\theta(s,\Theta, V):= - \nab\g\ast (\rho- \mu)(s,\Theta) - \frac{\eta}{2} \nab_\theta (\nab^2\g : V \otimes V) \ast (f-\mu) (s,\Theta, V) , \\[10pt]
    \displaystyle \frac{d V}{ds} = \V_v(s,\Theta, V):= - (\nab^2\g \, V) \ast (f-\mu)(s,\Theta ,V).
\end{cases}
\end{equation}
We finish this section by stating some theorems concerning this equation. 

\begin{thm}[Well-posedness and regularity of the MMD kinetic equation]\label{thm:well-posedness}
    Let $\g\in C^{3,1}_b$, $f_0 \in \mathcal{P}_2(\R^{d}\times \R^{d\times r})$ compactly supported in the $v$ variable and $\mu \in \mathcal{P}(\R^d)$. 
    
    For all $T>0$, there exists a unique, compactly supported in $v$, solution $f \in C([0, T], \mathcal{P}_2(\R^{d}\times \R^{d\times r}))$ to \eqref{eq:MMDFlow}, which furthermore satisfies 
    \begin{equation}
       \frac{d}{dt}\mathcal{F}(f) + \int_{\R^d\times \R^{d\times r}} \bigg( \bigg\vert \nab_\theta \frac{\d\mathcal{F}}{\d f}\bigg\vert^2 + \frac{1}{\eta} \bigg\vert \nab_v \frac{\d\mathcal{F}}{\d f}\bigg\vert^2 \bigg) \, df \le 0.
    \end{equation}

    Finally, if $\g$ is smooth enough, the regularity of $f_0$ is propagated in time. 
\end{thm}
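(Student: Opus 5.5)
We construct the solution as a fixed point of the characteristic map \eqref{eq:MMDFlow}, in the spirit of the Dobrushin/Cauchy--Lipschitz theory for Vlasov-type equations. Fix $R_0$ with $\supp f_0\subset \R^d\times \{|v|\le R_0\}$.

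\emph{Step 1: the velocity field.} For a measure $f$ supported in $\{|v|\le R\}$, the field $\V=(\V_\theta,\V_v)$ is built by convolving derivatives of $\g$ up to order three against $f-\mu$. The weights are polynomial in $v$: quadratic for $\V_\theta$, linear for $\V_v$. Since $\g\in C^{3,1}_b$, the field $\V$ is bounded on $\R^d\times\{|v|\le R'\}$ by $C(1+R^2)(1+|v|^2)$. It is Lipschitz there with constant $C_{R,R'}$. The fourth-order Lipschitz bound enters only through the term $\nab_\theta\nab^2\g$.

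Stability in $f$ comes from the Kantorovich--Rubinstein duality: for $f,f'$ supported in $\{|v|\le R\}$, we have $\|\V[f]-\V[f']\|_{L^\infty(|v|\le R')}\le C_{R,R'}W_1(f,f')\le C_{R,R'}W_2(f,f')$. This uses that the test functions $(\theta',v')\mapsto D^k\g(\theta-\theta')\,v'^{\otimes j}$ are Lipschitz on the support.

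The $v$-equation is linear in $V$ with a bounded matrix coefficient, since $|\int\nab^2\g\,(f-\mu)|\le 2\|\nab^2\g\|_\infty$ when we interpret $(\nab^2\g\,V)\ast(f-\mu)$ as acting on the $v'$ of $f$. Concretely, $\V_v(\theta,v) = -\int \nab^2\g(\theta-\theta')v'\,df(\theta',v')$. Since $\mu$ carries $v'=0$, this quantity is bounded by $\|\nab^2\g\|_\infty R$. So by Gronwall, $|V(t)|\le R_0+\|\nab^2\g\|_\infty\int_0^t R(s)\,ds$. Taking $R(t)=\sup_{\supp f_t}|v|$, we obtain $R(t)\le R_0e^{\|\nab^2\g\|_\infty t}$.

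This a priori bound is what makes the solution global. I expect it to be the main obstacle, because the $\theta$-field grows quadratically in $v$. The bound on $R(t)$ controls this growth.

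\emph{Step 2: the fixed point.} On the set $X_T=\{f\in C([0,T],\mathcal P_2): \supp f_t\subset\{|v|\le R_0e^{Ct}\}\}$ with the metric $\sup_t W_2$, we define $\Phi(f)_t=Z_f(t)\#f_0$, where $Z_f$ is the flow of $\V[f]$. It is well-defined and globally Lipschitz on the relevant $v$-ball, so $\Phi$ maps $X_T$ into itself.

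The standard coupling estimate gives
\[
\frac{d}{dt}|Z_f-Z_{f'}|\le L|Z_f-Z_{f'}|+C\,W_2(f_t,f'_t).
\]
It follows that $W_2(\Phi f_t,\Phi f'_t)\le C\int_0^t e^{L(t-s)}W_2(f_s,f'_s)\,ds$. A weighted-in-time norm, or iteration on short intervals using the uniform a priori bound, yields a unique fixed point on every $[0,T]$. The second moment in $\theta$ propagates because $\V_\theta$ is bounded on the support.

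\emph{Step 3: energy dissipation and regularity.} For the energy identity, we compute $\frac{\d\mathcal F}{\d f}(\theta,v)=\g\ast(\rho-\mu)+\frac{\eta}{2}(\nab^2\g:v\otimes v)\ast(f-\mu)$. The factor $\frac{\eta}{2}$ arises from the symmetry of the quadratic form, by the same bilinear symmetry used for the interaction. We then check that $\V_\theta=-\nab_\theta\frac{\d\mathcal F}{\d f}$ and $\V_v=-\frac1\eta\nab_v\frac{\d\mathcal F}{\d f}$. The $v$-gradient of $\frac{\eta}{2}\nab^2\g:v\otimes v$ is $\eta\nab^2\g\,v$, matching the weighted metric.

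Differentiating $\mathcal F(f_t)=\mathcal F(Z(t)\#f_0)$ along characteristics, which is legitimate since everything is $C^1$ on the compact $v$-support with integrable bounds, gives equality:
\[
\frac{d}{dt}\mathcal F=-\int\Big(|\nab_\theta\tfrac{\d\mathcal F}{\d f}|^2+\tfrac1\eta|\nab_v\tfrac{\d\mathcal F}{\d f}|^2\Big)df.
\]
This in particular implies the stated inequality.

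Finally, for regularity, if $\g\in C^{k+2,1}_b$ then $\V\in C^{k}$ with bounds depending on $R(t)$. Hence the flow $Z(t)$ is a $C^k$ diffeomorphism, and the density $f_t=f_0(Z(t)^{-1})\,|\det DZ(t)^{-1}|$ inherits $C^{k-1}$ or Sobolev regularity by Gronwall estimates on $DZ$.
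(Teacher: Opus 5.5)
Your proposal is correct and follows essentially the paper's route: bounds and $W_2$-stability of $\V$ on bounded $v$-sets, finite speed of propagation in $v$, a coupling/Gronwall estimate along characteristics, the identity $\V=-\big(\nab_\theta\frac{\d\mathcal F}{\d f},\frac1\eta\nab_v\frac{\d\mathcal F}{\d f}\big)$ for the dissipation, and regularity via the flow map. You do go slightly further than the paper: it uses the coupling/Gronwall estimate only for uniqueness and leaves existence to ``standard arguments'', whereas your contraction on $X_T$ gives existence too, and your Kantorovich--Rubinstein argument on the compact $v$-support replaces the paper's explicit coupling computation with $M_4$ moments.

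One slip needs fixing. By definition, $\V_v(\theta,v)=-\nab^2\g\ast(\rho-\mu)(\theta)\,v+\int\nab^2\g(\theta-\theta')v'\,df(\theta',v')$, so your ``concrete'' formula drops the term linear in $v$ that you had just mentioned. The correct bound $|\V_v|\le\|\nab^2\g\|_\infty(2|v|+R(t))$ gives $R(t)\le R_0e^{3\|\nab^2\g\|_\infty t}$, so you should take $C=3\|\nab^2\g\|_\infty$ in the definition of $X_T$; nothing else in the argument changes.
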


\section{Experiments}

In this section, we present numerical experiments to validate our model \eqref{eq:main}. In the following figures, the left panels show the evolution of the loss and the free energy, while the right panels display the \emph{effective sharpness}, defined as $\eta \lambda_{\max}$ where $\lambda_{\max}$ is the top eigenvalue of the Hessian (or the effective Hessian). We emphasize that this effective sharpness is distinct from the true sharpness $\lambda_{\max}$; it is the quantity that determines stability in gradient descent, with the Edge of Stability threshold being exactly $2.0$.

Our first set of experiments is designed to validate the model \eqref{eq:main} on a matrix factorization problem. This allows us to compare the trajectories of the true gradient descent dynamics with the ones predicted by our model, and to confirm that the free energy is indeed the right quantity to track in order to understand the optimization process at the Edge of Stability.

Specifically, we consider the problem of recovering a target matrix $Y \in \R^{d \times d}$ through a depth-$L$ linear factorization $P(\theta) = W_L W_{L-1} \dots W_1$, where $\theta = (W_1,\ldots,W_L)\in (\R^{d\times d})^L$, i.e. $W_i \in \R^{d \times d}$. The loss is the standard Mean Squared Error: $E(\theta) = \hal \| P(\theta) - Y \|_F^2$. In our experiments, we set $d=2$ and $L=3$. The target $Y$ is generated as $Y = U U^T$ with $U \in \R^{d \times d}$ having i.i.d. standard Gaussian entries. The weights are initialized as $W_i = I + \epsilon_i$, where $I$ is the identity matrix and $\epsilon_i$ is a noise matrix with i.i.d. Gaussian entries of scale $0.6$, followed by a global noise perturbation of scale $0.1$. We use a learning rate $\eta = 0.077$, which places the system in the Edge of Stability regime.

\begin{figure}[H]
    \center
    \includegraphics[scale=0.4]{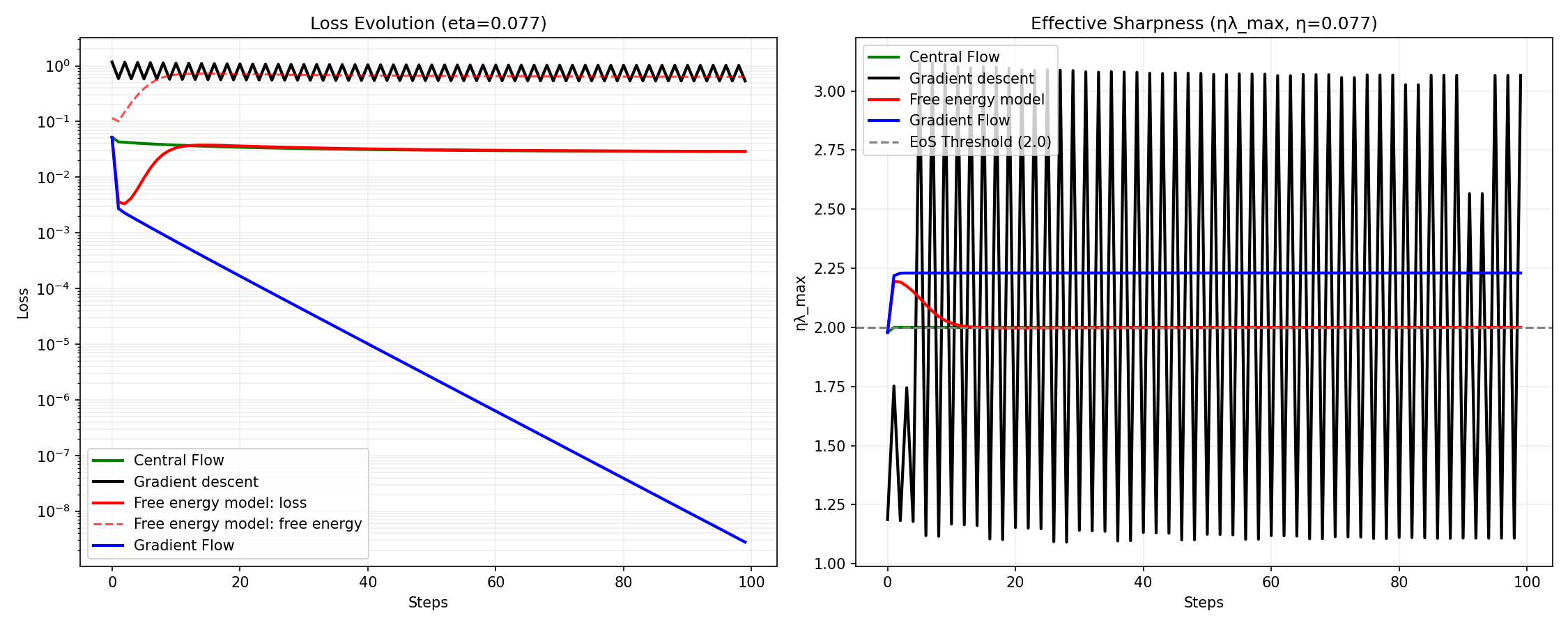}
    \includegraphics[scale=0.4]{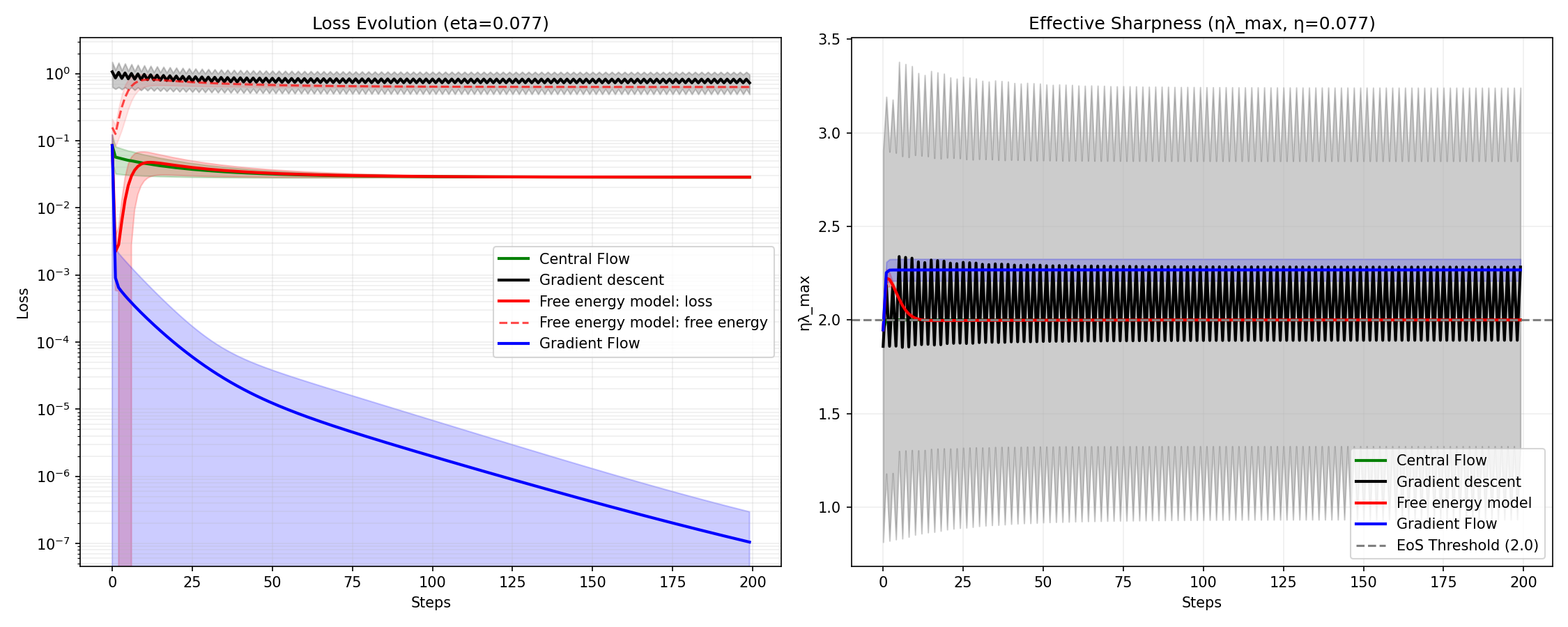}
    \caption{Comparison of the trajectories of the true gradient descent dynamics and the model \eqref{eq:main} for matrix factorization.}
\end{figure}

The second set of experiments is designed to validate the model on the CIFAR-10 dataset. We use a subset of $n=500$ images and a 2-layer CNN architecture with $32$ channels and GELU activation. The model is trained using the Mean Squared Error (MSE) loss on the 10 classes. We set the learning rate to $\eta = 0.02$, which is significantly above the stability threshold $2/\lambda_{\max}$. This confirms that our model captures the enveloppe of the fast oscillations on short timescales of optimization. On longer timescales, we see that the free energy is still a good predictor of the downward trend of the optimization, while the loss itself is not.

Contrary to the matrix-factorization case, we must tackle the high dimensionality of the problem, which makes it impossible to track the full $\Sigma$ matrix. We therefore only track the top eigenvectors of the Hessian, which are recomputed once in a while, but the eigenvalues are obtained by our model. More precisely, denoting $\nab^2 E(\theta) = U X U^T$, where $U$ is the matrix of eigenvectors and $X$ is the diagonal matrix of eigenvalues, we track the evolution of the top $r$ eigenvectors $v_1,\dots,v_r$ and their associated eigenvalues $\lambda_1,\dots,\lambda_r$. The evolution of the eigenvalues is given by
\begin{equation*}
    \frac{d\lambda_i}{dt} = - 2\eta \lambda_i + \eta^2 \lambda_i^2.
\end{equation*}
Note that this is equivalent to saying that the eigenvectors do not rotate much over these timescales, i.e. $\displaystyle \frac{dU}{dt}\approx 0$. The fact that our experiments show that this approximation is good on short timescales suggests that the top eigenvectors of the Hessian do not rotate much locally, which informs us about the geometry of the loss landscape at low energies, regardless of optimization dynamics.

\begin{figure}[H]
    \center
    \includegraphics[scale=0.4]{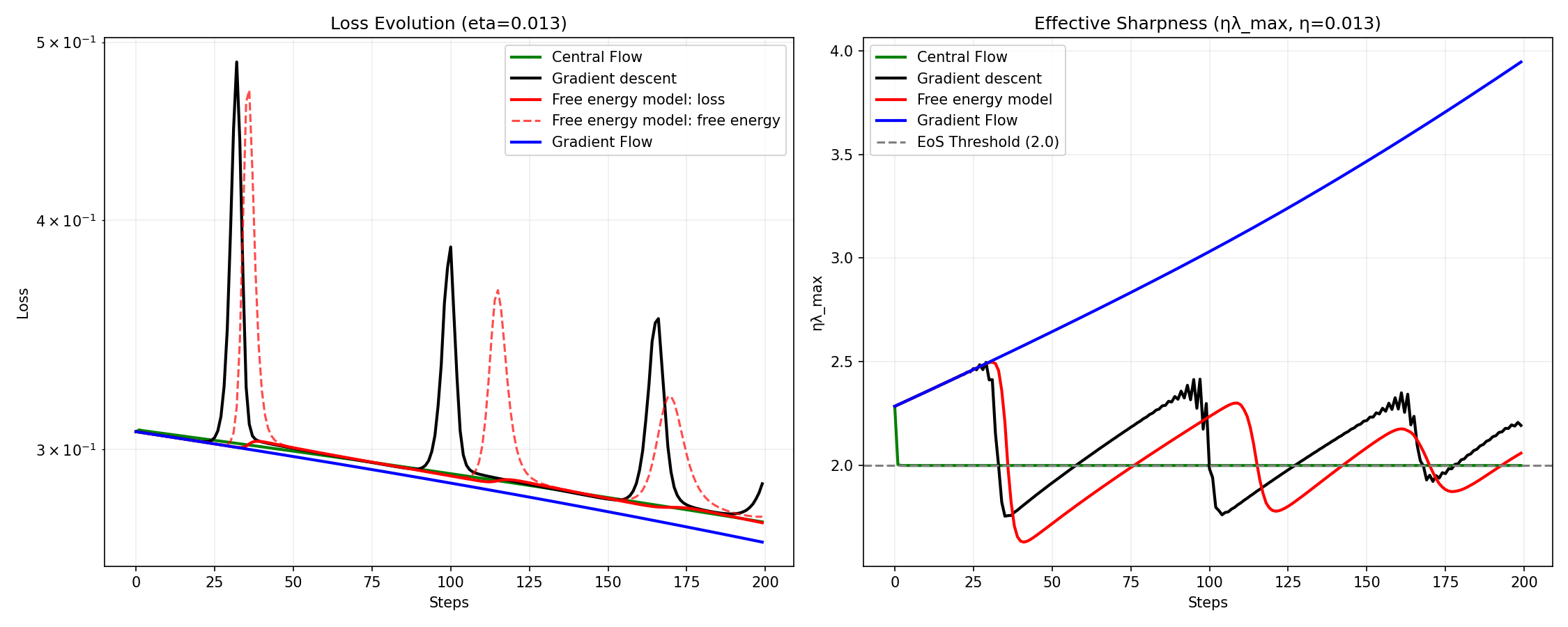}
    \includegraphics[scale=0.4]{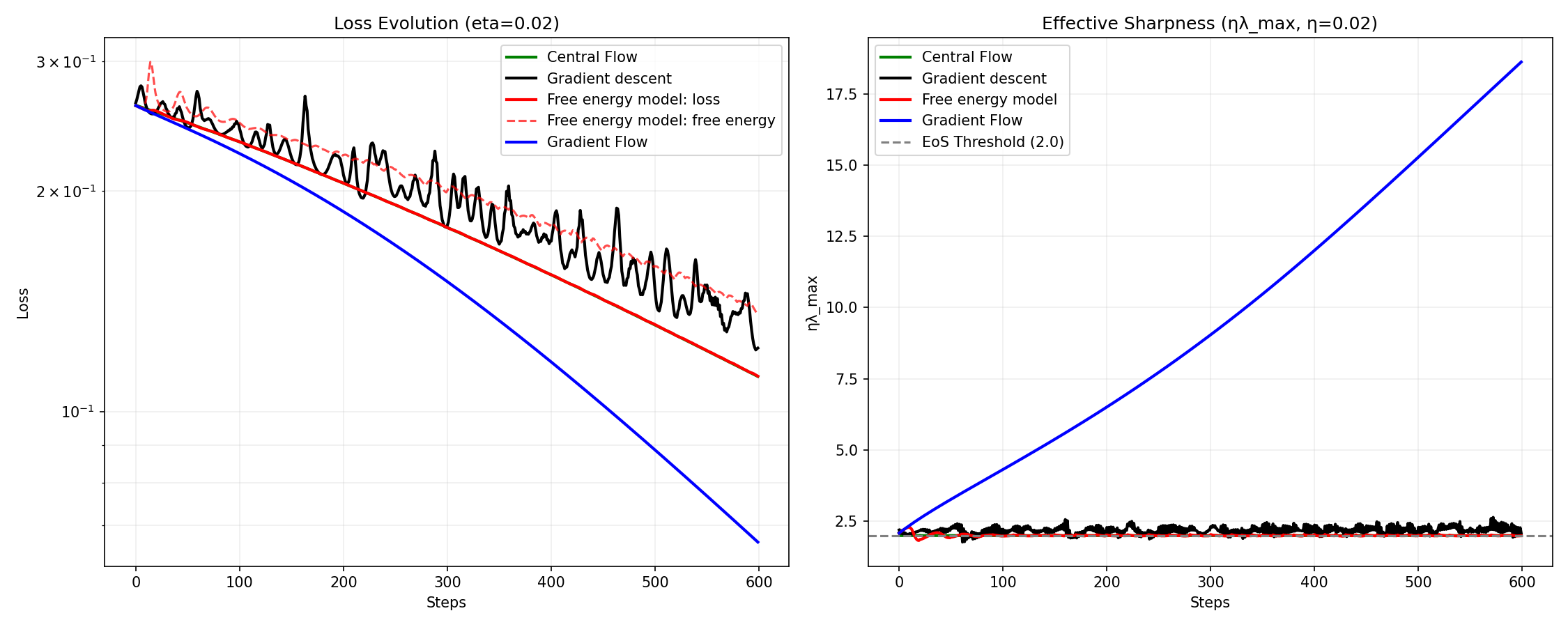}
    \caption{Comparison of the trajectories of the true gradient descent dynamics and the model \eqref{eq:main} on CIFAR-10 with a CNN.}
\end{figure}

In the last set of experiments, the parameters are identical except for the architecture, which is a MLP. 
\begin{figure}[H]
    \center
    \includegraphics[scale=0.4]{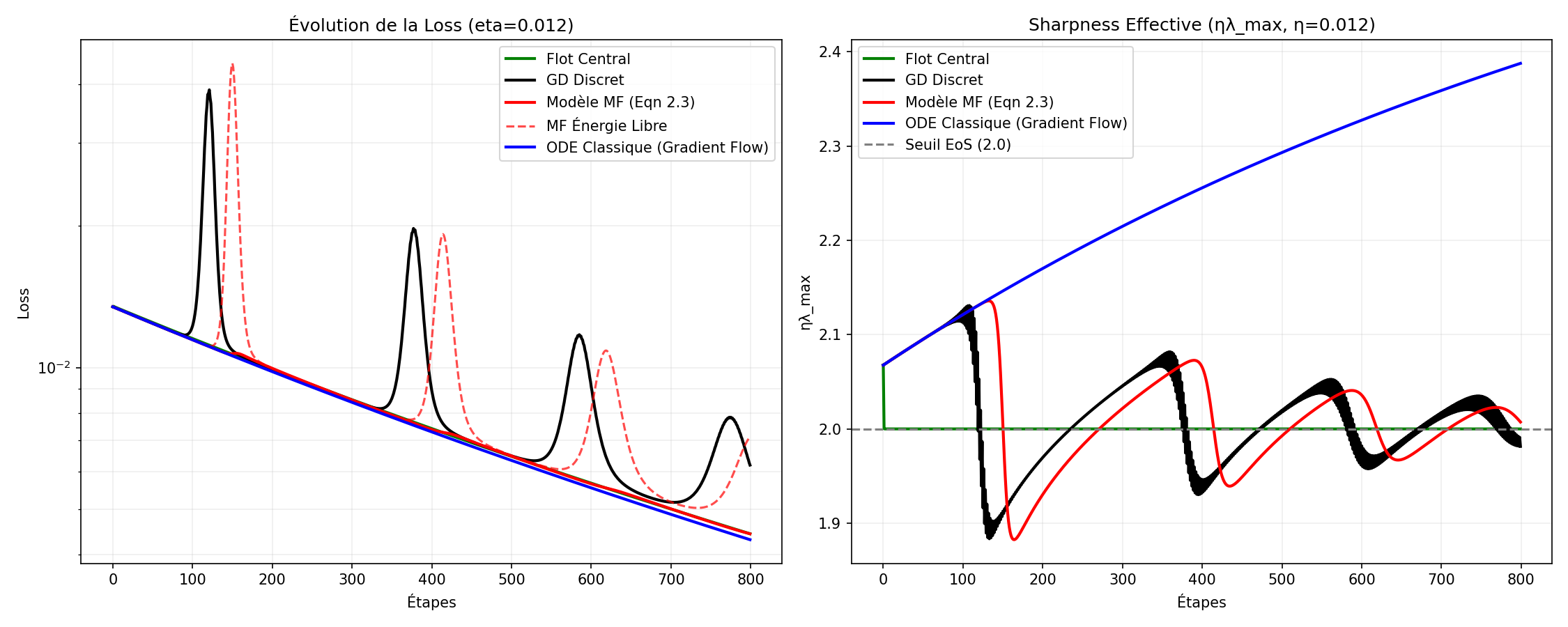}
    \includegraphics[scale=0.4]{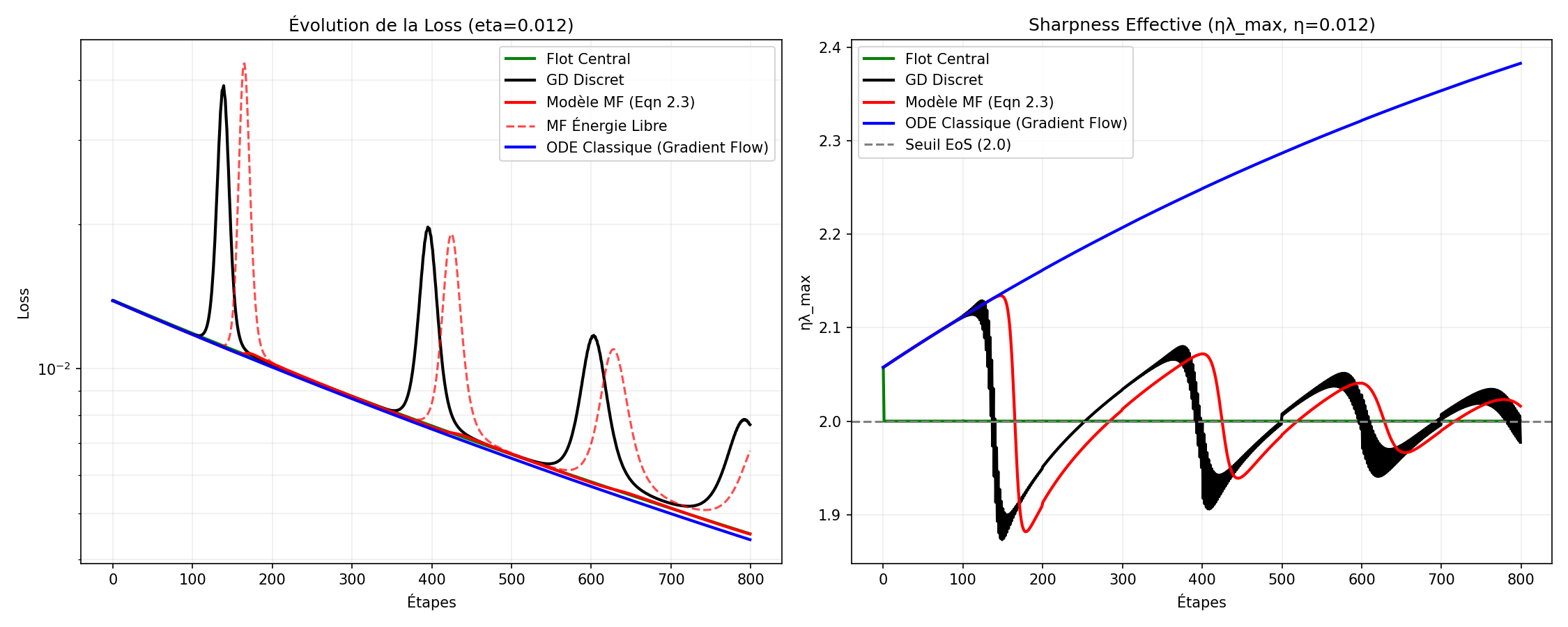}
    \includegraphics[scale=0.4]{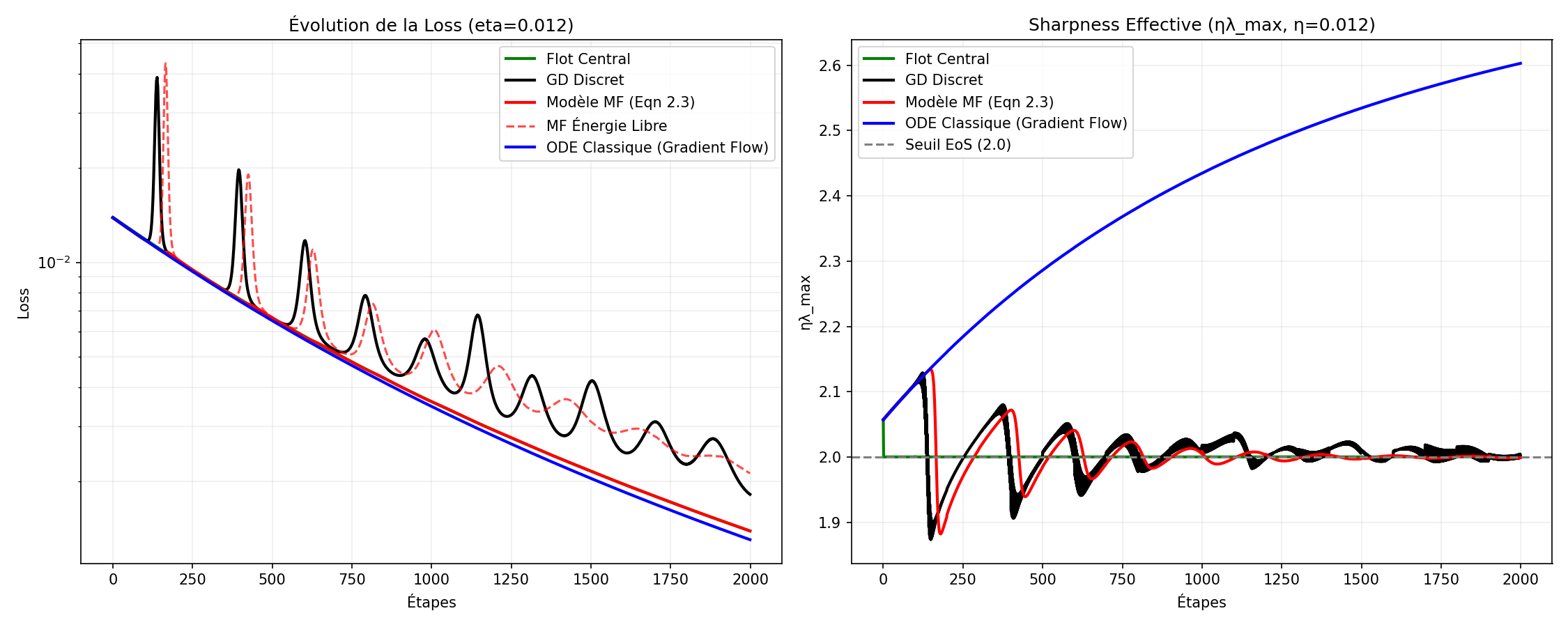}
    \caption{Comparison of the trajectories of the true gradient descent dynamics and the model \eqref{eq:main} on CIFAR-10 with a MLP. In the first simulation, we recompute the top eigenvectors of the Hessian at each step, while in the second simulation, we recompute them only once every 100 steps, suggesting that the eigenvectors do not rotate significantly on this timescale. For the last simulation concerning longer timescales, we see that the free energy is still a better predictor of the loss than the central flow.}
\end{figure}

\section{Discussion}

\subsection{Comparison with other works}

\paragraph{Central flow.}
Our model shares the macroscopic drift term $-\hal \nab \nabla^2 E : \Sigma$ with the Central Flow framework \cite{cohen2024centralflow}, which effectively captures the self-stabilization driven by discretization. 
However, several key differences set our approach apart. First, our model is designed to capture the enveloppe of the oscillations on short timescales of the order $1/\eta$, whereas Central Flow provides a coarser description of the dynamics by only considering timescales where the $\Sigma$ variable has relaxed, which is hard-coded in their framework. 

The main conceptual departures are:
\begin{itemize}
    \item \textbf{Free energy:} While Central Flow monitors the standard loss $E(\bar{\theta})$, our results suggest that the effective free energy $F(\theta, \Sigma)$ is the natural quantity to track the optimization in such unstable regimes.
    \item \textbf{Short-time relaxation:} Our framework treats $\Sigma$ as an independent variable that may not relax on short timescales, allowing the model to capture the slow envelope of the oscillations.
    \item \textbf{Sharpness increase below the stability threshold:} Our model gives a quantitative explanation for the sharpness increase generally observed in the training of neural network through the last term $\eta^2 \nab^2E(\theta)\Sigma\nab^2E(\theta)$, which tends to increase the eigenvalues that are below the stability threshold.
\end{itemize}

\paragraph{Rod flow.} 
While we were writing this article, Lénaïc Chizat told us about the Rod Flow model developed in \cite{regis2026rod,regis2026adam}, which leverages the almost two-periodicity of the oscillations at the Edge of Stability to derive an effective model for the optimization dynamics. 

It would be interesting to understand if our model can be seen as some kind of interpolation between the Central Flow and the Rod Flow.

\paragraph{Kinetic theory of gases.}
The kinetic equation \eqref{eq:MMD} that we derive for the two-layer network in the mean-field regime shares some mild similarities with collional models in kinetic theory, such as the Boltzmann and Landau equations, 
in the sense that it describes the necessary modification of an effective model in order to account for the instability of the microscopic dynamics. Notice that for the Coulomb interaction potential, the equation \eqref{eq:MMD} features a very singular kernel whose degenerate diffusive part depends on the oscillation directions $v$. 

\subsection{Information on the loss landscape from the numerical experiments}

It seems that not recomputing the eigenvectors is harmless on short timescales, which suggests that the top eigenvectors of the Hessian do not change much locally. This informs us about the geometry of the loss landscape at low energies, regardless of optimization dynamics.
\subsection{Some open questions}

We now give a non-exhaustive list of autocritics related to our work. 

\paragraph{On the model \eqref{eq:main}.}
As for the Central Flow, the derivation of the model \eqref{eq:main} is only formal. Our attempt to provide correct order parameters governing the validity of the model, such as the scaling of the fluctuations $\d\theta$, is still very preliminary and requires a more rigorous analysis.

The fact that our model is less precise on longer timescales of optimization suggest that the neglected higher order terms in the expansion of the gradient might play a role in the long-term dynamics. It would be interesting to understand if these terms can be included in a systematic way, and if they can capture the long-term behavior of the optimization process at the Edge of Stability.

It would be interesting to deepen the links between our model and the Central Flow. In particular, does our model converges to the Central Flow on longer timescales? Can we recover the Central Flow on longer timescales?

\paragraph{On the kinetic equation \eqref{eq:MMD}.}
The kinetic equation \eqref{eq:MMD} is a novel object of study, and its mathematical properties are still largely unexplored. We have only established some basic well-posedness and regularity results, but many questions remain open regarding its long-term behavior, stability of stationary solutions, and convergence to global minimizers.

From the machine learning point of view, the regime in which this equation has been derived does not correspond to the maximal update regime, which is the one that is most commonly observed in practice. 

\section{Acknowledgments} I would like to warmly thank Lénaïc Chizat and Gabriel Peyré for telling me about the Rod Flow model and the link with Bures' metric in the flat landscape regime, respectively. I would also like to thank Gabriel Peyré for many stimulating discussions. This work was supported by the European Research Council
(ERC project WOLF).

\bibliographystyle{alpha}
\bibliography{references}

@article{cohen2021eos,
  title={Gradient Descent on Neural Networks Typically Occurs at the Edge of Stability},
  author={Cohen, Jeremy M and Kaur, Simran and Li, Y and Kolter, J Zico and Talwalkar, Ameet},
  journal={arXiv preprint arXiv:2103.00065},
  year={2021}
}

@inproceedings{damian2023self,
  title={Self-Stabilization: The Implicit Bias of Gradient Descent at the Edge of Stability},
  author={Damian, Alex and Nichani, Eshaan and Lee, Jason D},
  booktitle={International Conference on Learning Representations},
  year={2023}
}

@article{zhu2022minimalist,
  title={Understanding Edge-of-Stability Training Dynamics with a Minimalist Example},
  author={Zhu, Xu and Wang, Zhaodong and Wang, Xiao and Ge, Rong},
  journal={arXiv preprint arXiv:2210.03294},
  year={2022}
}

@article{cohen2024centralflow,
  title={Understanding Optimization in Deep Learning with Central Flows},
  author={Cohen, Jeremy M and Damian, Alex and Talwalkar, Ameet and Kolter, J Zico and Lee, Jason D},
  journal={arXiv preprint arXiv:2410.24206},
  year={2024}
}

@inproceedings{foret2021sharpnessaware,
title={Sharpness-aware Minimization for Efficiently Improving Generalization},
author={Pierre Foret and Ariel Kleiner and Hossein Mobahi and Behnam Neyshabur},
booktitle={International Conference on Learning Representations},
year={2021},
url={https://openreview.net/forum?id=6Tm1mposlrM}
}

@inproceedings{keskar2017largeminima,
  title        = {On Large-Batch Training for Deep Learning: Generalization Gap and Sharp Minima},
  author       = {Keskar, Nitish Shirish and Mudigere, Dheevatsa and Nocedal, Jorge and Smelyanskiy, Mikhail and Tang, Ping Tak Peter},
  booktitle    = {Proceedings of the 34th International Conference on Machine Learning (ICML)},
  volume       = {70},
  pages        = {1725--1734},
  year         = {2017},
  publisher    = {PMLR},
  url          = {https://proceedings.mlr.press/v70/keskar17a.html}
}

@inproceedings{chizat2018global,
  title={On the Global Convergence of Gradient Descent for Over-parameterized Models using Optimal Transport},
  author={Chizat, L\'ena{\"i}c and Bach, Francis},
  booktitle={Advances in Neural Information Processing Systems (NeurIPS)},
  volume={31},
  year={2018}
}

@InProceedings{yang21tensor4,
  title = 	 {Tensor Programs IV: Feature Learning in Infinite-Width Neural Networks},
  author =       {Yang, Greg and Hu, Edward J.},
  booktitle = 	 {Proceedings of the 38th International Conference on Machine Learning},
  pages = 	 {11727--11737},
  year = 	 {2021},
  editor = 	 {Meila, Marina and Zhang, Tong},
  volume = 	 {139},
  series = 	 {Proceedings of Machine Learning Research},
  month = 	 {18--24 Jul},
  publisher =    {PMLR},
  url = 	 {https://proceedings.mlr.press/v139/yang21c.html}
}

@inproceedings{chizat2019lazy,
  title={On Lazy Training in Differentiable Programming},
  author={Chizat, Lenaic and Oyallon, Edouard and Bach, Francis},
  booktitle={Advances in Neural Information Processing Systems},
  volume={32},
  year={2019}
}

@inproceedings{yoo2025understanding,
title={Understanding Sharpness Dynamics in {NN} Training with a Minimalist Example: The Effects of Dataset Difficulty, Depth, Stochasticity, and More},
author={Geonhui Yoo and Minhak Song and Chulhee Yun},
booktitle={Forty-second International Conference on Machine Learning},
year={2025},
url={https://openreview.net/forum?id=XfjrLEPOQV}
}

@inproceedings{kalra2025universal,
title={Universal Sharpness Dynamics in Neural Network Training: Fixed Point Analysis, Edge of Stability, and Route to Chaos},
author={Dayal Singh Kalra and Tianyu He and Maissam Barkeshli},
booktitle={The Thirteenth International Conference on Learning Representations},
year={2025},
url={https://openreview.net/forum?id=VZN0irKnl0}
}

@article{regis2026rod,
  title={Rod Flow: A Continuous-Time Model for Gradient Descent at the Edge of Stability},
  author={Regis, Eric and Chewi, Sinho},
  journal={arXiv preprint arXiv:2602.01480},
  year={2026}
}

@article{regis2026adam,
  title={A Rod Flow Model for Adam at the Edge of Stability},
  author={Regis, Eric and Chewi, Sinho},
  journal={arXiv preprint},
  year={2026}
}

@article{hochreiter1997flat,
  title={Flat minima},
  author={Hochreiter, Sepp and Schmidhuber, J{\"u}rgen},
  journal={Neural Computation},
  volume={9},
  number={1},
  pages={1--42},
  year={1997},
  publisher={MIT Press}
}

@book{bertsekas1999nonlinear,
  title={Nonlinear Programming},
  author={Bertsekas, Dimitri P.},
  year={1999},
  publisher={Athena Scientific},
  address={Belmont, MA},
  edition={2nd}
}

@book{nesterov2004introductory,
  title={Introductory Lectures on Convex Optimization: A Basic Course},
  author={Nesterov, Yurii},
  year={2004},
  publisher={Kluwer Academic Publishers},
  address={Boston},
  volume={87},
  series={Applied Optimization}
}

@article{mei2018mean,
  title={A mean field view of the landscape of two-layer neural networks},
  author={Mei, Song and Montanari, Andrea and Nguyen, Phan-Minh},
  journal={Proceedings of the National Academy of Sciences},
  volume={115},
  number={33},
  pages={E7665--E7671},
  year={2018},
  publisher={National Acad Sciences}
}

@inproceedings{rotskoff2018parameters,
  title={Parameters as interacting particles: long time convergence and asymptotic error scaling of neural networks},
  author={Rotskoff, Grant and Vanden-Eijnden, Eric},
  booktitle={Advances in Neural Information Processing Systems},
  volume={31},
  year={2018}
}

@article{gretton2012kernel,
  title={A kernel two-sample test},
  author={Gretton, Arthur and Borgwardt, Karsten M and Rasch, Malte J and Sch{\"o}lkopf, Bernhard and Smola, Alexander},
  journal={Journal of Machine Learning Research},
  volume={13},
  number={25},
  pages={723--773},
  year={2012}
}

@inproceedings{dziugaite2015training,
  title={Training generative neural networks via Maximum Mean Discrepancy optimization},
  author={Dziugaite, Gintare Karolina and Roy, Daniel M and Ghahramani, Zoubin},
  booktitle={Proceedings of the Thirty-First Conference on Uncertainty in Artificial Intelligence},
  pages={258--267},
  year={2015}
}

@inproceedings{arbel2019maximum,
  title={Maximum Mean Discrepancy Gradient Flow},
  author={Arbel, Michael and Korba, Anna and Dziugaite, Gintare Karolina and Gretton, Arthur and Laurent, B{\'e}atrice},
  booktitle={Advances in Neural Information Processing Systems (NeurIPS)},
  volume={32},
  year={2019}
}

@inproceedings{ahn2022learning,
  title={Learning and Efficiency of Gradient Descent at the Edge of Stability},
  author={Ahn, Kwangjun and Jing, Xiang and Sra, Suvrit},
  booktitle={Advances in Neural Information Processing Systems},
  volume={35},
  year={2022}
}

@book{mezard2009information,
  title={Information, physics, and computation},
  author={M{\'e}zard, Marc and Montanari, Andrea},
  year={2009},
  publisher={Oxford University Press}
}

\appendix

\section{Derivation of the model}

In this section, we provide the detailed computations leading to the continuous-time system \eqref{eq:main}. We start from the discrete gradient descent update:
\begin{eqnarray}\label{eq:GD_app}
    \tilde \theta_{k+1} = \tilde \theta_k - \eta \nab E(\tilde \theta_k).
\end{eqnarray}
We introduce the ansatz $\tilde \theta_k = \theta_k + \sqrt{\eta} \d\theta_k$, where $\theta_k$ represents the slow macroscopic trajectory and $ \d\theta_k$ represents the fast oscillations at the Edge of Stability. We assume $\E[\d\theta_k] = 0$ and denote $\Sigma_k = \E[\d\theta_k \d\theta_k^T]$.

\paragraph{Free energy and scaling of the fluctuations.}
The free energy can be obtained by Taylor expanding the energy around the mean trajectory:
\begin{equation*}
    E(\tilde \theta_k) = E(\theta_k) + \nab E(\theta_k) (\tilde \theta_k - \theta_k) + \frac12 \nab^2 E(\theta_k) : (\tilde \theta_k - \theta_k)(\tilde \theta_k - \theta_k)^T + O(|\tilde \theta_k - \theta_k|^3).
\end{equation*}
Averaging over the small fluctuations, one obtains the formula of the free energy:
\begin{equation*}
    F(\theta, \Sigma) := \E[E(\tilde \theta_k)] = E(\theta_k) + \frac12 \nab^2E(\theta_k) : \E[(\tilde \theta_k - \theta_k)(\tilde \theta_k - \theta_k)^T] 
\end{equation*}
At the edge of stability, we have the scaling $\eta \lambda_1(\nab^2 E)  \approx 2$. 
Looking at a regime where both the energy and the ``entropy'' contribute to the free energy, we obtain the scaling
\begin{equation*}
     \E[(\tilde \theta_k - \theta_k)(\tilde \theta_k - \theta_k)^T] \approx \eta E(\theta_k).
\end{equation*}
This justifies the scaling $\sqrt{\eta} \d\theta_k$ for the oscillations, in the regime we consider. 

\paragraph{Evolution of the mean trajectory.}
Taylor expanding the gradient of the energy around the mean $\theta_k$, we obtain:
\begin{align}
    \nab E(\tilde \theta_k) = \nab E(\theta_k) + \sqrt{\eta} \nab^2 E(\theta_k) \d\theta_k + \frac{\eta}{2} \nab\nab^2 E(\theta_k) : (\d\theta_k \d\theta_k^T) + O(\eta^{3/2} |\d\theta_k|^3).
\end{align}
Substituting this into the gradient descent update \eqref{eq:GD_app}:
\begin{align}
    \theta_{k+1} + \sqrt{\eta} \d\theta_{k+1} = \theta_k + \sqrt{\eta} \d\theta_k - \eta \left( \nab E(\theta_k) + \sqrt{\eta} \nab^2 E(\theta_k) \d\theta_k + \frac{\eta}{2} \nab \nab^2 E(\theta_k) : (\d\theta_k \d\theta_k^T) \right).
\end{align}
Averaging over the fast oscillations (assuming $\E[\d\theta_{k+1}] = 0$ and $\E[\d\theta_k] = 0$):
\begin{equation}
    \theta_{k+1} = \theta_k - \eta \nab E(\theta_k) - \frac{\eta^2}{2} \nab \nab^2 E(\theta_k) : \Sigma_k .
\end{equation}

\paragraph{Evolution of the fluctuations.}
To find the update for $\d\theta_k$, we subtract the mean update from the full update:
\begin{align}
    \sqrt{\eta} \d\theta_{k+1} &= \sqrt{\eta} \d\theta_k - (\theta_{k+1} - \theta_k) - \eta \nab E(\tilde \theta_k) \\
    &= \sqrt{\eta} \d\theta_k + \left( \eta \nab E(\theta_k) + \frac{\eta^2}{2} \nab^3 E : \Sigma_k \right) - \eta \left( \nab E(\theta_k) + \sqrt{\eta} \nab^2 E \d\theta_k + \frac{\eta}{2} \nab^3 E : \d\theta_k \d\theta_k^T \right) \\
    &= \sqrt{\eta} \d\theta_k - \eta^{3/2} \nab^2 E \d\theta_k + \frac{\eta^2}{2} \nab^3 E : (\Sigma_k - \d\theta_k \d\theta_k^T).
\end{align}
Dividing by $\sqrt{\eta}$, we obtain the fluctuation dynamics:
\begin{equation}
    \d\theta_{k+1} = (I - \eta \nab^2 E(\theta_k)) \d\theta_k + \frac{\eta^{3/2}}{2} \nab^3 E(\theta_k) : (\Sigma_k - \d\theta_k \d\theta_k^T).
\end{equation}
The covariance $\Sigma_{k+1} = \E[\d\theta_{k+1} \d\theta_{k+1}^T]$ is then calculated by taking the outer product:
\begin{align}
    \d\theta_{k+1} \d\theta_{k+1}^T &\approx (I - \eta \nab^2 E) \d\theta_k \d\theta_k^T (I - \eta \nab^2 E)^T \\
    &= \d\theta_k \d\theta_k^T - \eta (\nab^2 E \d\theta_k \d\theta_k^T + \d\theta_k \d\theta_k^T \nab^2 E) + \eta^2 \nab^2 E \d\theta_k \d\theta_k^T \nab^2 E.
\end{align}
Averaging over the oscillations yields:
\begin{equation}
    \Sigma_{k+1} = \Sigma_k - \eta (\nab^2 E \Sigma_k + \Sigma_k \nab^2 E) + \eta^2 \nab^2 E \Sigma_k \nab^2 E.
\end{equation}

\paragraph{Continuous-time limit.}
By viewing $\theta_{k+1} - \theta_k \approx \dot \theta \Delta t$ and $\Sigma_{k+1} - \Sigma_k \approx \dot \Sigma \Delta t$ with $\Delta t = 1$, we arrive at the system \eqref{eq:main}. Note that for this to be a valid approximation, the evolution of $\theta$ and $\Sigma$ must be slow compared to the oscillation frequency, which is satisfied when the system is near the stability threshold but the macroscopic drift is small.

\section{Derivation of the kinetic model}

\subsection{The mean-field regime}

We now apply our model to the mean-field regime of two-layer neural networks. We thus consider $N$ neurons, each described by its weights $\theta_i \in \R^d$ and the covariance $\Sigma_{i, j} \in \R^{d \times d}$ between neurons $i$ and $j$. Since we are interested in the behaviour of one typical neuron, independently of its position in the network, it is natural to consider the empirical measure
\begin{equation}
    \rho_N = \frac{1}{N} \sum_{i=1}^N \delta_{\theta_i}.
\end{equation}

The energy we want to minimize can therefore be written in terms of a functional over the space of measures: 
\begin{equation}
    E(\rho) = \E_{(x,y) \sim \mathcal{D}} \left[ \ell\left(y, \int \phi(\theta, x) d\rho(\theta)\right) \right],
\end{equation}
where $\phi(\theta, x)$ denotes the output of a single neuron with weights $\theta$ and input $x$, and $\mathcal{D}$ is the data distribution. 

If the $N$ neurons are driven at the edge of stability, the optimization dynamics reads\footnote{We have also rescaled time by a factor $N/\eta$ (looking at large timescales), so that de quantities we are considering will be of order one as $N\to\infty$.}
\begin{equation}
    \begin{cases}
    \displaystyle \frac{d\theta_i}{dt} = - N\nab_{\theta_i} E(\rho_N) - \frac{\eta}{2} N\nab_{\theta_i} \nab^2 E(\rho_N) : \Sigma, \\[10pt]
    \displaystyle\frac{d\Sigma_{i,j}}{dt} = - N(\Sigma  \nab^2 E(\rho_N))_{i,j} -N (\nab^2 E(\rho_N)  \Sigma)_{i,j} + \eta N(\nab^2 E(\rho_N) \Sigma \nab^2 E(\rho_N))_{i,j},
    \end{cases}
\end{equation}
where the covariance $\Sigma_{i,j}\in \R^{d\times d}$ describes the fluctuations of the weights of the $i$-th and $j$-th neurons around their mean trajectories.

Now, notice that
\begin{align*}
    N\nab_{\theta_i} E(\rho_N) &= \E_{(x,y) \sim \mathcal{D}} \left[ \ell'\left(y, \int \phi(\theta, x) d\rho_N(\theta)\right) \nab_{\theta} \phi(\theta_i, x) \right], \\
    N\nab^2_{\theta_i, \theta_j} E(\rho_N) &= \frac{1}{N}\E_{(x,y) \sim \mathcal{D}} \left[ \ell''\left(y, \int \phi(\theta, x) d\rho_N(\theta)\right) \nab_{\theta} \phi(\theta_i, x) \otimes \nab_{\theta} \phi(\theta_j, x) \right] \\
    &+ \d_{i,j}\E_{(x,y) \sim \mathcal{D}} \left[ \ell'\left(y, \int \phi(\theta, x) d\rho_N(\theta)\right) \nab^2_{\theta} \phi(\theta_i, x) \right].
\end{align*}
Therefore,
\begin{align*}
    &N \nab_{\theta_i} \nab^2_{\theta_k, \theta_j} E(\rho_N) : \Sigma^{k,j}\\
    &= \frac{1}{N^2}\E_{(x,y) \sim \mathcal{D}} \left[ \ell'''\left(y, \int \phi(\theta, x) d\rho_N(\theta)\right) \nab_\theta\phi(\theta_i, x) \nab_{\theta} \phi(\theta_k, x) \otimes \nab_{\theta} \phi(\theta_j, x) : \Sigma^{k,j} \right] \\
    &+ \frac{1}{N}\E_{(x,y) \sim \mathcal{D}} \left[ \ell''\left(y, \int \phi(\theta, x) d\rho_N(\theta)\right) \nab_{\theta_i} \big( \nab_\theta\phi(\theta_k, x) \otimes \nab_{\theta} \phi(\theta_j, x) : \Sigma^{k,j}\big) \right] \\
    &+ \frac{1}{N} \E_{(x,y) \sim \mathcal{D}} \left[ \ell''\left(y, \int \phi(\theta, x) d\rho_N(\theta)\right) \nab_\theta \phi(\theta_i,x) \nab^2_{\theta} \phi(\theta_k, x) : \Sigma^{k,k} \right] \\
    &+   \E_{(x,y) \sim \mathcal{D}} \left[ \ell'\left(y, \int \phi(\theta, x) d\rho_N(\theta)\right) \nab_\theta\nab^2_{\theta} \phi(\theta_i, x) : \Sigma_{i,i} \right] .
\end{align*}
Let us now focus on the dynamics satisfied by $\Sigma_{i,j}$. We have
\begin{align*}
    N (\Sigma \nab^2 E(\rho_N) )_{i,j} &= N \Sigma^{i,k} \nab^2_{\theta_k, \theta_j} E(\rho_N) \\
    &= \frac{1}{N}\E_{(x,y) \sim \mathcal{D}} \left[ \ell''\left(y, \int \phi(\theta, x) d\rho_N(\theta)\right) \Sigma^{i,k} \nab_{\theta} \phi(\theta_k, x) \otimes \nab_{\theta} \phi(\theta_j, x)  \right] \\
    &+  \E_{(x,y) \sim \mathcal{D}} \left[ \ell'\left(y, \int \phi(\theta, x) d\rho_N(\theta)\right) \Sigma^{i,j} \nab^2_{\theta} \phi(\theta_j, x)   \right] 
\end{align*}
and similarly
\begin{multline}
    N (\nab^2 E(\rho_N) \Sigma)_{i,j} = \frac{1}{N}\E_{(x,y) \sim \mathcal{D}} \left[ \ell''\left(y, \int \phi(\theta, x) d\rho_N(\theta)\right)  \nab_{\theta} \phi(\theta_i, x) \otimes \nab_{\theta} \phi(\theta_k, x) \Sigma^{k,j}  \right] \\
    + \E_{(x,y) \sim \mathcal{D}} \left[ \ell'\left(y, \int \phi(\theta, x) d\rho_N(\theta)\right) \nab^2_{\theta} \phi(\theta_i, x) \Sigma^{i,j}   \right] .
\end{multline}
Therefore, these terms are of order $1$ as $N\to\infty$. On the opposite, the quadratic term $N \nab^2 E(\rho_N) \Sigma \nab^2 E(\rho_N)$ is of order $1/N$, so that we can neglect it and obtain the following dynamics for $\Sigma$:
\begin{align}
    \frac{d \Sigma_{i,j}}{dt} &= - \frac{1}{N}\E_{(x,y) \sim \mathcal{D}} \left[ \ell''\left(y, \int \phi(\theta, x) d\rho_N(\theta)\right) \Sigma^{i,k} \nab_{\theta} \phi(\theta_k, x) \otimes \nab_{\theta} \phi(\theta_j, x)  \right] \\
    &- \frac{1}{N}\E_{(x,y) \sim \mathcal{D}} \left[ \ell''\left(y, \int \phi(\theta, x) d\rho_N(\theta)\right)  \nab_{\theta} \phi(\theta_i, x) \otimes \nab_{\theta} \phi(\theta_k, x) \Sigma^{k,j}  \right] \\
    &-  \E_{(x,y) \sim \mathcal{D}} \left[ \ell'\left(y, \int \phi(\theta, x) d\rho_N(\theta)\right) \Sigma_{i,j} \nab^2_{\theta} \phi(\theta_j, x)   \right] \\
    & -   \E_{(x,y) \sim \mathcal{D}} \left[ \ell'\left(y, \int \phi(\theta, x) d\rho_N(\theta)\right) \nab^2_{\theta} \phi(\theta_i, x) \Sigma_{i,j}   \right] .
\end{align}
Now, since the rank of $\Sigma$ is preserved (see \cref{rem:flat}), we can write $\Sigma_{i,j} = v_i \otimes v_j$, where $v_i\in \R^{d\times r}$ satisfies
\begin{multline}
    \frac{d v_i}{dt} = -  \E_{(x,y) \sim \mathcal{D}} \left[ \ell''\left(y, \int \phi(\theta, x) d\rho_N(\theta)\right)   \frac{1}{N}\sum_{k=1}^N\nab_{\theta} \phi(\theta_i, x) \otimes\nab_{\theta} \phi(\theta_k, x) \ v^k  \right] \\
    -   \E_{(x,y) \sim \mathcal{D}} \left[ \ell'\left(y, \int \phi(\theta, x) d\rho_N(\theta)\right) \nab^2_{\theta} \phi(\theta_i, x) v_i   \right] .
\end{multline}
Let us now focus on the dynamics of $\theta_i$. We have
\begin{align*}
    \frac{d\theta_i}{dt} &= - N\nab_{\theta_i} E(\rho_N) - \frac{\eta}{2} N\nab_{\theta_i} \nab^2 E(\rho_N) : v\otimes v .
\end{align*}
The curvature term can be rewritten as
\begin{align*}
    N \nab_{\theta_i} \nab^2 E(\rho_N) : v\otimes v &= \E_{(x,y) \sim \mathcal{D}} \left[ \ell'''\left(y, \int \phi(\theta, x) d\rho_N(\theta)\right) \nab_\theta\phi(\theta_i, x) \bigg|\frac{1}{N}\sum_{k=1}^N\nab_{\theta} \phi(\theta_k, x) \cdot v^k\bigg|^2  \right] \\
    &+ 2\E_{(x,y) \sim \mathcal{D}} \left[ \ell''\left(y, \int \phi(\theta, x) d\rho_N(\theta)\right) \nab_{\theta}^2 \phi(\theta_i, x) \cdot v^i\frac{1}{N}\sum_{k=1}^N  \nab_\theta\phi(\theta_k, x)\cdot v^k    \right] \\
    &+  \E_{(x,y) \sim \mathcal{D}} \left[ \ell''\left(y, \int \phi(\theta, x) d\rho_N(\theta)\right) \nab_\theta \phi(\theta_i,x) \frac{1}{N}\sum_{k=1}^N\nab^2_{\theta} \phi(\theta_k, x) : v^k \otimes v^k \right] \\
    &+  \E_{(x,y) \sim \mathcal{D}} \left[ \ell'\left(y, \int \phi(\theta, x) d\rho_N(\theta)\right) \nab_\theta\nab^2_{\theta} \phi(\theta_i, x) : v_i \otimes v_i \right] .
\end{align*}    
We now introduce the following empirical measure over the phase space $(\theta, v)$:
\begin{equation}
    f_N = \frac{1}{N} \sum_{i=1}^N \delta_{(\theta_i, v_i)}.
\end{equation}
In the limit $N\to \infty$, we can expect $f_N$ to converge to a measure $f$ over the phase space $(\theta, v)$, and $\rho_N$ to converge to a measure $\rho$ over the space of parameters $\theta$. In this limit, we obtain a closed kinetic equation on the density $f$.

\subsection{The maximum mean discrepancy case}

The maximum mean discrepancy case is a subcase of the mean-field regime described in the previous section. Indeed, if we take the quadratic loss $$\displaystyle \ell(y, \hat y) = \frac{1}{2} (y-\hat y)^2,$$
a feature map $\phi(\theta, x)$ and a data distribution $\mathcal{D}$ such that $$y(x) = \int \phi(\theta, x) d\mu(\theta)$$ 
and $\E_x [ \phi(\theta, x) \phi(\theta', x) ] = K(\theta, \theta')$, then the mean-field energy $E(\rho)$ coincides with the maximum mean discrepancy metric $$\displaystyle \text{MMD}^2(\rho, \mu) := \hal \int K\ast (\rho - \mu) \, d(\rho-\mu).$$

In this case, we obtain
\begin{align*}
   & \frac{\p f}{\p t} - \div_\theta \bigg( f \nab_\theta K \ast (\rho - \mu )\bigg) - \eta \div_\theta \bigg( f \nab_\theta \int \nab^2_{\theta,\theta'} K(\theta,\theta') : v\otimes v' \, df(\theta', v') \bigg) \\
    & -\frac{\eta}{2} \div_\theta \bigg( f \nab_\theta \int \nab^2_{\theta',\theta'}K(\theta,\theta') : v'\otimes v' \, df(\theta',v')\bigg) - \frac{\eta}{2} \div_\theta \bigg(f \nab_\theta \nab_{\theta,\theta}^2   K\ast (\rho - \mu) (\theta) : v\otimes v  \bigg) \\
    & - \div_v \bigg( f \int \nab^2_{\theta, \theta'} K(\theta, \theta') v' df(\theta', v')\bigg) - \div_v \bigg( f \int \nab^2_{\theta,\theta} K(\theta, \theta') v \, (\rho-\mu)(\theta')\bigg) = 0.
\end{align*}

Assuming that the kernel $K$ can be written as $K(\theta,\theta') \equiv \g(\theta-\theta')$, we obtain
\begin{align*}
   & \frac{\p f}{\p t} - \div_\theta \bigg( f \nab\g \ast (\rho - \mu )\bigg) + \eta \div_\theta \bigg( f \nab_\theta \int \nab^2\g(\theta-\theta') : v\otimes v' \, df(\theta', v') \bigg) \\
    & -\frac{\eta}{2} \div_\theta \bigg( f \nab_\theta \int \nab^2\g(\theta-\theta') : v'\otimes v' \, df(\theta',v')\bigg) - \frac{\eta}{2} \div_\theta \bigg(f \nab_\theta \nab^2\g   \ast (\rho - \mu) (\theta) : v\otimes v  \bigg) \\
    & + \div_v \bigg( f \int \nab^2\g(\theta-\theta')v' df(\theta', v')\bigg) - \div_v \bigg( f \int \nab^2\g(\theta- \theta') v \, (\rho-\mu)(\theta')\bigg) = 0.
\end{align*}
The different terms in this equation can then be symmetrized, and we obtain
\begin{align}
    \begin{split}
        & \frac{\p f}{\p t} - \div_\theta \bigg( f \nab\g \ast (\rho - \mu )\bigg) -\frac{\eta}{2} \div_\theta \bigg( f \nab_\theta  (\nab^2\g : v\otimes v) \ast (f-\mu) \bigg) \\
        & - \div_v \bigg( f  (\nab^2\g\, v)\ast (f-\mu)  \bigg) = 0,
    \end{split}
\end{align}
where we have still denoted $\mu$ the target measure on the extended phase space $\R^d\times \R^{d\times r}$ defined by 
$$ \mu(d\theta)\otimes \d_{v=0}(dv). $$

\section{Experimental details}

In this section, we provide the numerical parameters used for the experiments presented in Section 4. The experiments were conducted using the \texttt{PyTorch} framework.

\paragraph{Matrix Factorization.} For the matrix factorization tasks, the target matrix $Y$ is fixed and generated once per task. We use a learning rate $\eta = 0.077$. The number of steps for the discrete dynamics is set to $100$. For the effective continuous-time model, we initialize $\Sigma$ by first running $10$ \emph{relaxation steps} of gradient descent. These relaxation steps allow the system to reach the Edge of Stability regime, where the top eigenvalue of the Hessian $\lambda_{\max}$ satisfies $\eta \lambda_{\max} \approx 2$, and the oscillations settle into a stable regime. After relaxation, we perform $20$ \emph{sampling steps} to estimate the initial covariance $\Sigma_0$ by averaging the outer products of the centered gradient steps (jumps). This empirical $\Sigma_0$ is then used as the initial condition for the continuous-time evolution of $\Sigma(t)$.

\paragraph{CIFAR-10.} For the CIFAR-10 experiments, we use a subset of $n=500$ training examples and the MSE loss. The 2-layer CNN has a width of $32$ channels. We use a learning rate of $\eta = 0.02$. Due to the higher dimensionality and more complex landscape, the relaxation phase is longer, typically $700$ to $1100$ steps (depending on the specific experiment, e.g., $700$ for the 30-seed aggregate). The sampling phase is set to $10$ steps. The top eigenvalue of the Hessian is tracked using the LOBPCG solver every $2$ steps to monitor the EoS threshold.

\paragraph{Choice of the learning rate.} An important feature, that actually differentiates the CNN experiment and the MLP one, is the value of the learning rate. More precisely, we expect our model to be less performant on too large learning rates, where the neglected terms become important. 

\section{Proof of \cref{prop:one-particle}}

We recall the definition of the free energy
\begin{align}
    F(\theta, \Sigma) = E(\theta) + \frac{\eta}{2} \nab^2 E(\theta) : \Sigma.
\end{align}

\begin{proof}[Proof of \cref{prop:one-particle}]
    
    The first part of the proposition is a mere application of the Cauchy-Lipschitz theorem. For the second part, we have
    \begin{align}
    \frac{d}{dt} \nab^2 E(\theta) : \Sigma &= \nab \nab^2 E(\theta) : \Sigma \cdot \frac{d\theta}{dt}+ \nab^2 E(\theta) : \frac{d\Sigma}{dt} \\
    &= -  \eta \nab \nab^2 E(\theta) : \Sigma \cdot \nab E(\theta) - \frac{\eta^2}{2}  |\nab \nab^2 E(\theta) : \Sigma |^2  \\
    &-  \eta \nab^2 E(\theta) : \nab^2 E(\theta) \bigg( 2\Id -  \eta\nab^2 E(\theta) \bigg)  \Sigma .
    \end{align}
    Moreover,
    \begin{align}
        \frac{d}{dt} E(\theta) &= \nab E(\theta) \cdot \frac{d\theta}{dt} \\
        &= - \eta |\nab E(\theta)|^2 - \frac{\eta^2}{2} \nab E(\theta) \cdot \nab \nab^2 E(\theta) : \Sigma.
    \end{align}
    Therefore,
    \begin{align}
        \frac{d}{dt}\bigg( E(\theta) + \frac{\eta}{2} \nab^2 E(\theta) : \Sigma \bigg) &= - \eta |\nab E(\theta)|^2 - \eta^2 \nab E(\theta) \cdot \nab \nab^2 E(\theta) : \Sigma  - \frac{\eta^3}{4}  |\nab \nab^2 E(\theta) : \Sigma |^2  \\
        &- \frac{\eta^2}{2} \nab^2 E(\theta) : \nab^2 E(\theta) \bigg( 2\Id -  \eta \nab^2 E(\theta) \bigg)  \Sigma \\
        &= - \eta\bigg| \nab E(\theta) + \frac{\eta}{2} \nab \nab^2 E(\theta) : \Sigma\bigg|^2 \\
        &- \frac{\eta^2}{2} \bigg[\nab^2 E(\theta)   \bigg( 2\Id -  \eta \nab^2 E(\theta) \bigg) \nab^2 E(\theta) \bigg]: \Sigma.
    \end{align}
    We obtain the result. 
\end{proof}

\section{Proof of \cref{thm:well-posedness}}

In this section, we set $\eta\equiv 1$ for simplicity. We study the equation obtained in the maximum mean discrepancy case:
\begin{align}\label{eq:MMDAppendix}
\begin{cases}
        \displaystyle \frac{\p f}{\p t} - \div_\theta \bigg( f \nab\g \ast (\rho - \mu )\bigg) -\hal \div_\theta \bigg( f \nab_\theta  (\nab^2\g : v\otimes v) \ast (f-\mu) \bigg) \\
         - \div_v \bigg( f  (\nab^2\g\, v)\ast (f-\mu)  \bigg) = 0, \\[10pt]
        \displaystyle\rho := \int_{\R^{d\times r}} f\, dv,
\end{cases}
\end{align}
with initial condition $f_0 \in \mathcal{P}_2(\R^d\times \R^{d\times r})$ and target measure $\mu \in \mathcal{P}_2(\R^d)$.\footnote{Recall the abuse of notation where we identify $\mu\in \mathcal{P}(\R^d\times \R^{d\times r})$ with $\mu(d\theta)\otimes \d_{0}(dv)$. } The (forward) flow associated to this equation is
\begin{equation}\label{eq:characteristics}
\begin{cases}
    \displaystyle \frac{d\Theta}{ds} =\V_\theta(s,\Theta, V):= - \nab\g\ast (\rho- \mu)(s,\Theta) - \hal \nab_\theta (\nab^2\g : V \otimes V) \ast (f-\mu) (s,\Theta, V) , \\[10pt]
    \displaystyle \frac{d V}{ds} = \V_v(s,\Theta, V):= - (\nab^2\g \, V) \ast (f-\mu)(s,\Theta ,V),
\end{cases}
\end{equation}
with initial condition $\Theta (0, \theta, v) = \theta$ and $V(0,\theta, v) = v$.
Furthermore, the solution $f$ can be represented by the formulation
\begin{equation}\label{eq:FlowFormulation}
    f(t,\theta, v ) := Z(t,\theta, v;f) \# f_0 ,
\end{equation}
where $Z := (\Theta, V)$ is the flow associated to \eqref{eq:characteristics}. We will also denote $\V := (\V_\theta, \V_v)$.

We introduce the following notation for the velocity moments of the distribution $f$:
\begin{equation*}
    M_k(f) := \int_{\R^d\times \R^{d\times r}} |v|^k f(\theta, v)\, d\theta dv.
\end{equation*}

\subsection{Study of the flow map}
In this section, we study the flow \eqref{eq:characteristics} and establish its regularity properties. 
Specifically, the goal is to prove the following two lemmas. 

\begin{lemma}[Regularity of the velocity fields]\label{lem:Lip} 
   Let $f\in \mathcal{P}_2(\R^d\times \R^{d\times r})$ and $\mu\in \mathcal{P}_2(\R^d)$ be two probability measures. Assume that $\g\in C^{n+2,1}(\R^d)$ for some $n\ge 0$. Then,
For all $n,p\ge 0$, there exists a constant $C_{n,p}>0$ depending on $\|\g\|_{C^{n+2,1}}$ such that 
    \begin{align*}
        \sup_{|v|\le R,\theta\in \R^d} \bigg(\| \nab_\theta^n\nab_v^p \mathcal{V}_\theta \| + \|  \nab_\theta^{n+1}\nab_v^p \mathcal{V}_v \|\bigg) \le C_{n,p}\bigg( 1+ R^2 + M_2(f) \bigg).
    \end{align*}
\end{lemma}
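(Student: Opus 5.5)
Since everything is linear in $f-\mu$, the plan is to write each velocity field out explicitly as a convolution, differentiate under the integral, and bound pointwise.

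\textbf{Explicit form.} In the $\theta$ variable we have
\begin{align*}
\V_\theta(\theta,v) &= -\int \nab\g(\theta-\theta')\, d(\rho-\mu)(\theta') \\
&\quad - \hal \int \nab^3\g(\theta-\theta') : (v-v')\otimes(v-v')\, d(f-\mu)(\theta',v').
\end{align*}
Here the convolution in the extended phase space is read as $(h\ast(f-\mu))(\theta,v)=\int h(\theta-\theta',v-v')\,d(f-\mu)$. If the paper's symmetrized expression is instead read as the sum of terms with $v\otimes v$, $v\otimes v'$ and $v'\otimes v'$ contractions (as in the unsymmetrized derivation), the structure is the same: a polynomial of degree at most two in $(v,v')$ times derivatives of $\g$ evaluated at $\theta-\theta'$. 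Similarly
\[
\V_v(\theta,v) = -\int \nab^2\g(\theta-\theta')\,(v-v')\, d(f-\mu)(\theta',v'),
\]
which is affine in $v$ with coefficients $\nab^2\g(\theta-\theta')$ and $v'$.

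\textbf{Differentiation and bounds.} I would apply $\nab_\theta^n\nab_v^p$ to $\V_\theta$ and $\nab_\theta^{n+1}\nab_v^p$ to $\V_v$. The $\theta$-derivatives fall only on $\g$. This gives at most $\nab^{n+3}\g$ in both fields: $\nab^{n+1+2}\g$ from $\V_v$ and $\nab^{n+3}\g$ from the curvature part of $\V_\theta$. These are controlled by $\|\g\|_{C^{n+2,1}}$, since the top derivative is Lipschitz and hence bounded a.e. The $v$-derivatives only lower the polynomial degree, and vanish once $p\ge 3$ (resp. $p\ge 2$). For each term I bound the integrand pointwise by $C\|\g\|_{C^{n+2,1}}(1+|v|^2+|v'|^2)$, using $|v-v'|^2\le 2|v|^2+2|v'|^2$ and $|v||v'|\le \hal(|v|^2+|v'|^2)$ for the cross terms. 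Integrating against $|f-\mu|\le f+\mu$, with total mass at most $2$, then gives the bound on $|v|\le R$:
\[
C\bigl(1+R^2+M_2(f)+M_2(\mu)\bigr) = C\bigl(1+R^2+M_2(f)\bigr),
\]
since $\mu$ is supported on $\{v=0\}$ and so $M_2(\mu)=0$. The first term of $\V_\theta$ involves only $\nab^{n+1}\g$ and is bounded by $2\|\g\|_{C^{n+1}}$.

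\textbf{Main obstacle.} The only delicate point is derivative counting. The curvature term of $\V_\theta$ already contains $\nab^3\g$, so $n$ more $\theta$-derivatives need $\nab^{n+3}\g$. This is exactly the Lipschitz seminorm of $\nab^{n+2}\g$, so it is available only as an $L^\infty$ (a.e.) bound. I would interpret the top-order statement in the a.e./Lipschitz sense, which is enough for Cauchy–Lipschitz on the characteristics. The top-order estimate should then be read as a Lipschitz constant of $\nab_\theta^{n-1}\V_\theta$, estimated via difference quotients of $\nab^{n+2}\g$ rather than pointwise derivatives. Differentiation under the integral is justified by dominated convergence, since $f\in\mathcal P_2$ gives integrability of the quadratic dominating function.
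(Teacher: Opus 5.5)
Your proposal is correct and follows essentially the same route as the paper. The paper also expands the convolutions, lets the $\theta$-derivatives fall on $\g$, uses that the fields are polynomial of degree at most two (resp. one) in $v$ so that $v$-derivatives terminate, and bounds each term by $\|\nab^{k}\g\|_{L^\infty}$ times moments of $f$, with $\mu$ contributing nothing to the $v$-moments. Your observation that the top-order factor $\nab^{n+3}\g$ is only available as the Lipschitz constant of $\nab^{n+2}\g$, so the highest derivative must be read a.e. or as a Lipschitz bound, is a point the paper passes over; it does not change the argument.
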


\begin{lemma}[Stability of the velocity fields]\label{lem:measureStability}
    Let $R>0$ and $|v|\le R$. Assume $\g \in C^{3,1}_b(\R^d)$. For any $f_1, f_2 \in \mathcal{P}_4(\R^{d}\times \R^{d\times r})$, we have
    \begin{align*}
        &| \mathcal{V}_\theta (t,\theta, v; f_1) - \mathcal{V}_\theta (t,\theta, v; f_2) | \le C\big( R + {M_2(f_1)}^\hal + M_2(f_2)^{\hal} \big) W_2(f_1,f_2) \\
        &+ C\big( R^2 + R (M_2(f_1)^\hal + M_2(f_2)^\hal) + M_4(f_1)^\frac{1}{4} M_4(f_2)^\frac{1}{4} \big) W_2(f_1, f_2), 
    \end{align*}
    and
    \begin{equation*}
        | \mathcal{V}_v (t,\theta, v; f_1) - \mathcal{V}_v (t,\theta, v; f_2) | 
        \le C  \big( 1+ R + M_2(f_1)^\hal \big) W_2(f_1, f_2),
    \end{equation*}
    where the constant $C>0$ depends on $\|\g\|_{C^{3,1}}$.
\end{lemma}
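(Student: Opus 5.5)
The statement to prove is \cref{lem:measureStability}: a Lipschitz bound on the velocity fields $\V_\theta$ and $\V_v$ with respect to the measure, in $W_2$. The plan is to write each field as an integral of a smooth test function against $f_i-\mu$ and compare through an optimal coupling. Let $\pi$ be an optimal $W_2$-coupling of $f_1$ and $f_2$ on $(\R^d\times\R^{d\times r})^2$, with generic pairs $(\theta_1,v_1)$, $(\theta_2,v_2)$. In every term the $\mu$-contribution is the same for $f_1$ and $f_2$, so it cancels in the difference; this holds because $\mu$ is supported on $\{v=0\}$ and does not depend on $i$. Hence only integrals against $f_1-f_2$ remain, and each can be written as
\begin{equation*}
\int \big[\psi(\theta,v;\theta_1,v_1)-\psi(\theta,v;\theta_2,v_2)\big]\,d\pi
\end{equation*}
for an explicit integrand $\psi$.

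\textbf{Step 1: the field $\V_v$.} Here $\psi=\nab^2\g(\theta-\theta')\,v'$ (reading the convolution $(\nab^2\g\,V)\ast f$ as in the symmetrized form, i.e.\ containing $v$ and $v'$ terms), plus a term $\nab^2\g(\theta-\theta')\,v$ integrated against $\rho_1-\rho_2$. The second term is bounded by $R\,\|\g\|_{C^{2,1}}W_1(\rho_1,\rho_2)\le CR\,W_2(f_1,f_2)$. For the first, split
\begin{equation*}
\psi(\cdot;\theta_1,v_1)-\psi(\cdot;\theta_2,v_2)=\big[\nab^2\g(\theta-\theta_1)-\nab^2\g(\theta-\theta_2)\big]v_1+\nab^2\g(\theta-\theta_2)(v_1-v_2).
\end{equation*}
Integrating against $\pi$ and applying Cauchy--Schwarz gives the bound $\|\nab^3\g\|_\infty M_2(f_1)^{1/2}W_2+\|\nab^2\g\|_\infty W_2$, which is the claimed $C(1+R+M_2(f_1)^{1/2})W_2$.

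\textbf{Step 2: the field $\V_\theta$.} The first piece, $\nab\g\ast(\rho_1-\rho_2)$, is bounded by $\|\nab^2\g\|_\infty W_2$. The quadratic piece, $\nab_\theta(\nab^2\g:v\otimes v)\ast(f-\mu)$, expands into three types of terms after symmetrization:
\begin{itemize}
\item terms of the form $\nab^3\g(\theta-\theta'):v\otimes v$, which involve no $v'$ and are handled like Step 1 with a factor $R^2$;
\item terms $\nab^3\g(\theta-\theta'):v\otimes v'$, which are linear in $v'$ and are handled exactly as in Step 1, giving $R(1+M_2^{1/2})$;
\item terms $\nab^3\g(\theta-\theta'):v'\otimes v'$, which are quadratic in $v'$.
\end{itemize}
For the quadratic terms, write
\begin{equation*}
v_1\otimes v_1-v_2\otimes v_2=(v_1-v_2)\otimes v_1+v_2\otimes(v_1-v_2).
\end{equation*}
Together with the $\theta$-difference, which uses $\|\nab^4\g\|_\infty$, i.e.\ the Lipschitz constant of $\nab^3\g$ available since $\g\in C^{3,1}_b$, this gives an integrand bounded by $C\big(|\theta_1-\theta_2|\,|v_1|^2+|v_1-v_2|(|v_1|+|v_2|)\big)$.

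\textbf{Main obstacle: controlling $|\theta_1-\theta_2|\,|v_1|^2$.} This term is the main obstacle. Cauchy--Schwarz gives
\begin{equation*}
\int|\theta_1-\theta_2|\,|v_1|^2\,d\pi\le W_2\Big(\int|v_1|^4\,d\pi\Big)^{1/2}=W_2\,M_4(f_1)^{1/2},
\end{equation*}
which is the source of the fourth moments and of the requirement $f_i\in\mathcal{P}_4$. To land on the stated geometric-mean form $M_4(f_1)^{1/4}M_4(f_2)^{1/4}$, I would symmetrize: either put the $\theta$-increment against $|v_1||v_2|$, or average the two splittings obtained by exchanging the roles of $f_1$ and $f_2$. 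Then Hölder gives
\begin{equation*}
\int|\theta_1-\theta_2|\,|v_1|\,|v_2|\,d\pi\le W_2\,\|v_1\|_{L^4}\|v_2\|_{L^4}=W_2\,M_4(f_1)^{1/4}M_4(f_2)^{1/4}.
\end{equation*}
The $|v_1-v_2|(|v_1|+|v_2|)$ terms give $W_2\,(M_2(f_1)^{1/2}+M_2(f_2)^{1/2})$ by Cauchy--Schwarz.

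Collecting all terms yields the two stated estimates, with constants depending only on $\|\g\|_{C^{3,1}}$.
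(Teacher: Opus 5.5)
Your proposal is correct and is essentially the paper's argument: an optimal coupling, telescoping of the integrand, Kantorovich--Rubinstein for the pure-$\theta$ pieces, and Cauchy--Schwarz/H\"older elsewhere. The only bookkeeping difference is that the paper does not expand $(v-v')^{\otimes 2}$ into monomials. Instead it telescopes $(v-v_1')^{\otimes 2}$ against $(v-v_2')^{\otimes 2}$ directly, so the increment $\nab^3\g(\theta-\theta_1')-\nab^3\g(\theta-\theta_2')$ falls on $(v-v_2')\otimes(v-v_1')$. This is precisely your first option: with $A_i=\nab^3\g(\theta-\theta_i)$, the split $A_1:v_1\otimes v_1-A_2:v_2\otimes v_2=(A_1-A_2):v_1\otimes v_2+A_1:v_1\otimes(v_1-v_2)+A_2:(v_1-v_2)\otimes v_2$ produces the geometric mean $M_4(f_1)^{1/4}M_4(f_2)^{1/4}$.

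Discard your second option, though. Averaging the two splittings only gives $\tfrac12\big(M_4(f_1)^{1/2}+M_4(f_2)^{1/2}\big)$, which dominates the geometric mean rather than being dominated by it. Choosing the better of the two splittings would work instead, since $\min(a,b)\le\sqrt{ab}$.
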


\begin{remark}
    Once these two lemmas are established, we can apply the Cauchy-Lipschitz theorem to obtain the existence and uniqueness of the flow $Z$ associated to \eqref{eq:characteristics}. Finally, the well-posedness theory of the equation \eqref{eq:main} can be obtained by using the flow formulation \eqref{eq:FlowFormulation} and the stability properties of the flow with respect to the initial condition $f_0$ using standard arguments in the theory of transport equations. 
    The only care that needs to be taken is to control the velocity moments of the solution $f$ in order to apply Lemma \ref{lem:Lip} and Lemma \ref{lem:measureStability}. As we shall see in \cref{prop:estimates}, there is no difficulty in doing so. 
\end{remark}

\begin{proof}[Proof of Lemma \ref{lem:Lip}]
    We fix some $R>0$ and $|v|\le R$. We have from H\"older's inequality:
    \begin{align*}
        \|\nab^k \nab\g\ast (\rho - \mu )\|_{L^\infty} \le 2 \| \nab^{k+1}\g\|_{L^\infty}.
    \end{align*}
    Moreover, developing the convolution product gives
    \begin{align*}
      &  \|\nab^k_\theta  (\nab^2\g : v\otimes v)\ast (f-\mu)\| \\
      &\le  2 \|\nab^{k+2}\g \|_{L^\infty}R^2 
      + 2 R \|\nab^{k+2}\g \|_{L^\infty}\int_{\R^d\times \R^{d\times r} } |v| f(\theta, v)\, d\theta dv 
      + \|\nab^{k+2}\g \|_{L^\infty} \int_{\R^d\times \R^{d\times r} } |v|^2 f(\theta, v)\, d\theta dv \\
      &\le 3 \|\nab^{k+2}\g \|_{L^\infty}R^2 
      + (1+R^2) \|\nab^{k+2}\g \|_{L^\infty} \int_{\R^d\times \R^{d\times r} } |v|^2 f(\theta, v)\, d\theta dv,
    \end{align*}
    where we have used $ab \le a^2/2  + b^2/2$ to obtain the last line. Now, we differentiate with respect to $v$:
    \begin{align*}
        \nab_v (\nab^2 \g : v\otimes v) \ast (f-\mu) &= 2 (\nab^2 \g \, v)\ast (f-\mu) \\
        \nab_v^2 (\nab^2 \g : v\otimes v) \ast (f-\mu) &= 2 \nab^2 \g \ast (\rho - \mu).
    \end{align*}
    The higher order derivatives in $v$ vanish identically. Therefore, 
    \begin{align*}
        & \| \nab_v \nab_\theta^k (\nab^2 \g : v\otimes v) \ast (f-\mu) \| \\
        &\le 4 R \|\nab^{k+2}\g \|_{L^\infty} + 2\|\nab^{k+2} \g \|_{L^\infty} \int_{\R^d\times \R^{d\times r}} |v| f(\theta, v)\, d\theta dv,
    \end{align*}
    and
    \begin{align*}
         \| \nab_v^2 \nab_\theta^k (\nab^2 \g : v\otimes v) \ast (f-\mu) \|_{L^\infty} \le 4 \|\nab^{k+2}\g\|_{L^\infty}.
    \end{align*}

    Let us now focus on the regularity of $\mathcal{V}_v$. We have
    \begin{align*}
        \| \nab_\theta^{k} (\nab^2\g\, v)\ast (f-\mu) \| &\le 2 R \|\nab^{k+2}\g\|_{L^\infty} + \|\nab^{k+2}\g\|_{L^\infty} \int_{\R^d\times \R^{d\times r}} |v|f(\theta, v)\, d\theta dv \\
       &\le \frac{5}{2} R\|\nab^{k+2}\g\|_{L^\infty} + \hal \|\nab^{k+2}\g\|_{L^\infty} \int_{\R^d\times \R^{d\times r}} |v|^2f(\theta, v)\, d\theta dv.
    \end{align*}
    Moreover, we can differentiate in $v$ and obtain
    \begin{align*}
        \nab_v (\nab^2\g\, v) \ast (f-\mu ) = \nab^2 \g\ast (\rho - \mu).
    \end{align*}
    Therefore, 
    \begin{align*}
        \| \nab_v \nab_\theta^{k} (\nab^2\g\, v)\ast (f-\mu) \|_{L^\infty} \le 2\|\nab^{k+2}\g\|_{L^\infty}.
    \end{align*}
    Again, all the higher order terms in $v$ identically vanish. 
\end{proof}

\begin{proof}[Proof of Lemma \ref{lem:measureStability}]
    We have
    \begin{align*}
        \mathcal{V}_\theta (t,\theta, v; f_1) - \mathcal{V}_\theta (t,\theta, v; f_2) = -\nab\g\ast (\rho_1-\rho_2) - \hal \nab_\theta (\nab^2 \g: v\otimes v) \ast (f_1-f_2).
    \end{align*}
    and
    \begin{align*}
         \mathcal{V}_v (t,\theta, v; f_1) - \mathcal{V}_v (t,\theta, v; f_2) = -(\nab^2 \g\, v) \ast (f_1 -f_2).
    \end{align*}
    For the first term of $\mathcal{V}_\theta$, we have from Kantorovich-Rubinstein duality:
    \begin{align*}
        |\nab\g\ast (\rho_1-\rho_2)| \le \|\nab^2\g\|_{L^\infty} W_1(\rho_1, \rho_2) \le \|\nab^2\g\|_{L^\infty} W_2(f_1, f_2).
    \end{align*}
    For the oscillatory part, let $\pi$ be a coupling between $f_1$ and $f_2$ for the $W_2$ distance. We have
    \begin{align*}
        & \nab_\theta (\nab^2 \g: v\otimes v) \ast (f_1-f_2) = \int \nab^3 \g(\theta - \theta') : (v - v')^{\otimes 2} \, d(f_1 - f_2)(\theta', v') \\
        &= \int \big[ \nab^3 \g(\theta - \theta_1') : (v - v_1')^{\otimes 2} - \nab^3 \g(\theta - \theta_2') : (v - v_2')^{\otimes 2} \big] \, d\pi(Z_1', Z_2') \\
        &= \int \big[ \nab^3 \g(\theta - \theta_1') : (v_2'-v_1') \otimes (v-v_1') + \nab^3 \g(\theta - \theta_1') : (v - v_2') \otimes (v-v_1') \\
        &- \nab^3\g(\theta - \theta_2'): (v - v_2') \otimes (v-v_1') - \nab^3\g(\theta - \theta_2') : (v - v_2') \otimes (v_1' -v_2')   \big]\, d\pi(Z_1', Z_2') \\
        &=  \int  \nab^3 \g(\theta - \theta_1') : (v_2'-v_1') \otimes (v-v_1') \, d\pi(Z_1',Z_2') \\
        &+ \int \big[ \nab^3\g(\theta - \theta_1' ) - \nab^3\g(\theta - \theta_2') \big] : (v - v_2') \otimes (v-v_1') \, d\pi(Z_1', Z_2') \\
        &- \int \nab^3\g(\theta - \theta_2') : (v - v_2') \otimes (v_1' -v_2')   \, d\pi(Z_1', Z_2').
    \end{align*}
    The first integral is bounded by 
    \begin{align*}
       & \|\nab^3\g \|_{L^\infty} \int  \big( |v| + |v_1'|\big) |v_2'-v_1'| \, d\pi(Z_1', Z_2') \\
       &\le \|\nab^3\g \|_{L^\infty}  \bigg( \int (|v|+ |v_1'|)^2 \, d\pi(Z_1', Z_2')\bigg)^\hal \bigg( \int |v_2'-v_1'|^2 \, d\pi(Z_1', Z_2')\bigg)^\hal.
    \end{align*}
    The second one is bounded by
    \begin{align*}
        &\|\nab^4\g\|_{L^\infty} \int |\theta_1' - \theta_2'| |v-v_2'||v-v_1'| \, d\pi(Z_1', Z_2') \\
        &\le \|\nab^4\g\|_{L^\infty} \bigg( \int|v-v_2'|^2|v-v_1'|^2 \, d\pi(Z_1', Z_2')\bigg)^\hal \bigg( \int |\theta_1'-\theta_2'|^2 \, d\pi(Z_1', Z_2')\bigg)^\hal.
    \end{align*}
    We then use $|v|\le R$ to obtain the bound
    \begin{align*}
        &\|\nab^4\g\|_{L^\infty} \bigg( \int|v-v_2'|^2|v-v_1'|^2 \, d\pi(Z_1', Z_2')\bigg)^\hal \bigg( \int |\theta_1'-\theta_2'|^2 \, d\pi(Z_1', Z_2')\bigg)^\hal \\
        &\le 4 \|\nab^4\g\|_{L^\infty} \bigg( R^4 + R^2 (M_2(f_1) + M_2(f_2)) + M_4(f_1)^\hal M_4(f_2)^\hal \bigg)^\hal \bigg( \int |\theta_1'-\theta_2'|^2 \, d\pi(Z_1', Z_2')\bigg)^\hal.
    \end{align*}
    Finally, the last term is bounded similarly to the first one. Taking the infimum over the couplings $\pi$, we obtain the stability result for $\mathcal{V}_\theta$. 

    For $\mathcal{V}_v$, we have
    \begin{align*}
       & \mathcal{V}_v (t,\theta, v; f_1) - \mathcal{V}_v (t,\theta, v; f_2)  \\
       &=  \int \big[ \nab^2\g(\theta - \theta_1) (v-v_1) - \nab^2 \g(\theta - \theta_2) (v-v_2) \big] \, d\pi (Z_1, Z_2) \\
       &= \int \big[ (\nab^2\g(\theta - \theta_1) - \nab^2 \g(\theta - \theta_2)) (v-v_1) + \nab^2 \g(\theta - \theta_2) (v_2-v_1) \big] \, d\pi (Z_1, Z_2).        
    \end{align*}
    We therefore have
    \begin{align*}
        & |\mathcal{V}_v (t,\theta, v; f_1) - \mathcal{V}_v (t,\theta, v; f_2) | \\
        &\le \|\nab^3\g\|_{L^\infty} \int |\theta_1 - \theta_2| |v-v_1|  \, d\pi (Z_1, Z_2) + \|\nab^2\g\|_{L^\infty} \int |v_1 - v_2 | \, d\pi (Z_1, Z_2) \\
        &\le \|\nab^3\g\|_{L^\infty} \bigg(\int |\theta_1 - \theta_2|^2 \, d\pi (Z_1, Z_2) \bigg)^\hal ( 2R^2 + 2M_2(f_1) )^\hal \\
        &+ \|\nab^2\g\|_{L^\infty}\bigg(\int |v_1 - v_2|^2 \, d\pi (Z_1, Z_2) \bigg)^\hal .
    \end{align*}
    This allows us to conclude by considering the infimum over couplings. 
\end{proof}

\subsection{Well-posedness of the equation}

We now establish the well-posedness of the equation. 
As already mentionned, the only care that needs to be taken is to control the velocity moments of the solution $f$ in order to apply Lemma \ref{lem:Lip} and Lemma \ref{lem:measureStability}. The following proposition shows that this is indeed the case, and also establishes other important properties of the solution $f$.

\begin{prop}[A priori estimates]\label{prop:estimates}
    Let $\g \in C^{2,1}_b(\R^d)$. Consider $f_0\in \mathcal{P}_2(\R^{d}\times \R^{d\times r})$ and $\mu \in \mathcal{P}(\R^d)$ such that $f_0$ is compactly supported in the $v$ variable. 
    Let $f \in C([0, T], \mathcal{P}_2(\R^{d}\times \R^{d\times r}))$ be a locally Lipschitz solution to \eqref{eq:MMDAppendix} that is compactly supported in the $v$ variable for all $t\in [0, T]$. Then the following properties hold:
    \begin{enumerate}[label=\roman*.]
        \item (Conservation of mass) $$\int_{\R^d\times \R^{d\times r}} f(t, \theta, v) \, d\theta dv = 1$$ for all $t \in [0, T]$.
        \item (Propagation of moments) For all $k\ge 1$ and $t\in [0,T]$, we have
        \begin{align*}
            M_k(t) = \int_{\R^d\times \R^{d\times r}} |v|^k f(t,\theta, v)\, d\theta dv \le M_p(0)e^{p\|\nab^2\g\|_{L^\infty} T}.
        \end{align*} 
        \item (Finite speed of propagation in the $v$ variable) If $\supp_v f_0\subset B_R(0)$ for some $R>0$, then $\supp_v f(t)\subset B_{(R+CT)e^{CT}}(0)$ for all $t\in[0,T]$, where $C>0$ depends on $\|\nab^2\g\|_{L^\infty}$ and $M_1(0)$.
        \item (Free energy dissipation) The free energy $\mathcal{F}(f)$, defined as
        \begin{equation*}
            \mathcal{F}(f) := \hal \int \g \ast (\rho - \mu) \, d(\rho - \mu) + \frac{\eta}{2} \int (\nab^2\g : v\otimes v) \ast (f-\mu) \, d(f-\mu),
        \end{equation*}
        is non-increasing. Specifically,  
        $$\frac{d}{dt} \mathcal{F}(f(t)) = - \int_{\R^d\times \R^{d\times r}} |\nab \frac{\delta \mathcal{F}}{\delta f}|^2 df \le 0.$$

    \end{enumerate}
\end{prop}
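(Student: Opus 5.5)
The plan is to work throughout with the flow formulation \eqref{eq:FlowFormulation}, $f(t)=Z(t,\cdot;f)\#f_0$, where $Z=(\Theta,V)$ solves \eqref{eq:characteristics}. Since $f$ is locally Lipschitz in time with compact $v$-support, \cref{lem:Lip} makes $\V$ locally Lipschitz on $\R^d\times B_R$, so the characteristics are well defined. Item i.\ is then immediate: a push-forward of a probability measure by a measurable map is again a probability measure. Equivalently, one tests the weak form of \eqref{eq:MMDAppendix} against $\varphi\equiv 1$.

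For ii.\ and iii., the key is a pointwise bound on $\V_v$ that is linear in $|v|$. Using $\mu=\mu\otimes\d_0$ in the $v$ variable, I would expand
\begin{equation*}
\V_v(t,\theta,v)=-\big(\nab^2\g\ast(\rho-\mu)\big)(\theta)\,v+\int\nab^2\g(\theta-\theta')\,v'\,df(\theta',v').
\end{equation*}
This gives
\begin{equation*}
|\V_v|\le 2\|\nab^2\g\|_{L^\infty}|v|+\|\nab^2\g\|_{L^\infty}M_1(t).
\end{equation*}
For the moments, I would differentiate $M_k(t)=\int|V(t,\theta,v)|^k\,df_0$ along characteristics. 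This yields
\begin{equation*}
\dot M_k\le k\|\nab^2\g\|_{L^\infty}\big(2M_k+M_{k-1}M_1\big).
\end{equation*}
Hölder's inequality gives $M_{k-1}\le M_k^{(k-1)/k}$ and $M_1\le M_k^{1/k}$, hence $M_{k-1}M_1\le M_k$. Gronwall then yields $M_k(t)\le M_k(0)e^{3k\|\nab^2\g\|_{L^\infty}t}$, i.e.\ the stated propagation with an adjusted constant in the exponent. In particular $M_1$ stays bounded on $[0,T]$ by a constant depending on $M_1(0)$. Inserting this into the bound on $|\V_v|$ gives $\frac{d}{ds}|V|\le C|V|+C$ along each characteristic. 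A second Gronwall argument gives $|V(t)|\le(R+Ct)e^{Ct}$ whenever $|v|\le R$, and since $f(t)$ is the push-forward of $f_0$ by $Z$, this is the support statement iii.

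For iv., I would introduce the symmetric kernel $K((\theta,v),(\theta',v'))=\nab^2\g(\theta-\theta'):(v-v')\otimes(v-v')$ on the extended phase space. The second term of $\mathcal{F}$ is then a quadratic form $\int K\ast(f-\mu)\,d(f-\mu)$, up to the normalisation of $\eta$, which I would fix (with $\eta=1$) so that it matches the coefficients in \eqref{eq:MMDAppendix}; the statement's $\eta/2$ versus the main text's $\eta/4$ must be reconciled there. The first variation is
\begin{equation*}
\frac{\d\mathcal F}{\d f}(\theta,v)=\g\ast(\rho-\mu)(\theta)+\tfrac12\,K\ast(f-\mu)(\theta,v).
\end{equation*}
Next I would check that $\nab_\theta\frac{\d\mathcal F}{\d f}=-\V_\theta$ and $\nab_v\frac{\d\mathcal F}{\d f}=-\V_v$. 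The $v$-identity uses $\nab_v\big[\nab^2\g:(v-v')^{\otimes2}\big]=2\nab^2\g\,(v-v')$. Then, writing $\mathcal F(f(t))=\mathcal F(Z_t\#f_0)$, I would differentiate the double integral over $f_0\otimes f_0$ (and the cross terms with $\mu$) in time along characteristics. The symmetry of $K$ and $\g$ collapses the two resulting terms into
\begin{equation*}
\int\nab\frac{\d\mathcal F}{\d f}\cdot\V\,df=-\int\Big|\nab\frac{\d\mathcal F}{\d f}\Big|^2df\le 0.
\end{equation*}

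The main obstacle is justifying this differentiation under the integral. The kernel $K$ grows quadratically in $v$, and $\mu$ has no $v$-moment issue but $f$ does. I would handle this with the compact $v$-support from iii., which gives a uniform radius on $[0,T]$, together with $\g\in C^{2,1}_b$. On the resulting set the integrand and its time derivative along characteristics are bounded uniformly, locally Lipschitz in $t$, and at most quadratic in $\theta$-independent quantities. Dominated convergence then applies, and the computation holds for a.e.\ $t$ and hence in integrated form.
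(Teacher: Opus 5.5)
Your proposal is correct and follows the paper's proof essentially step for step. Both arguments get mass conservation from the divergence/push-forward structure, use the same linear-in-$|v|$ bound on $\V_v$ with $M_{k-1}M_1\le M_k$ and Gr\"onwall, and obtain the $v$-support bound by the same Gr\"onwall estimate along characteristics; the only difference is that you compute $\frac{d}{dt}M_k$ along characteristics rather than testing the weak form against $|v|^k$, and you spell out the dissipation identity that the paper only calls a ``direct calculation''.

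Your two caveats are right as well. The argument yields the exponent $3k\|\nab^2\g\|_{L^\infty}t$, as in the paper's own proof, not the exponent in the statement. And $\nab\frac{\d\mathcal F}{\d f}=-\V$ holds only with coefficient $\frac14$ on the $v$-term of $\mathcal F$ (the main text's $\eta/4$), not the $\eta/2$ written in the statement.
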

\begin{proof}
    The conservation of mass follows from the divergence form of \eqref{eq:MMDAppendix}. To establish the propagation of velocity moments, let $p\ge 1$ and differentiate $M_p(t)$ with respect to time:
    \begin{align*}
        \dot{M_p}(t) &= \int_{\R^d\times \R^{d\times r}} |v|^p \div_v (f (\nab^2\g\, v)\ast (f-\mu)) \, d\theta dv \\
        &= - p\int_{\R^d\times \R^{d\times r}} |v|^{p-2} v\cdot (\nab^2\g\, v)\ast (f-\mu ) f\, d\theta dv \\
        &\le p \|\nab^2\g\|_{L^\infty} \int_{\R^d\times \R^{d\times r}} |v|^{p-1} | v'| f' f\, d\theta dv + p \|\nab^2\g\ast(\rho-\mu) \|_{L^\infty} M_p(t) \\
        &\le p \|\nab^2\g\|_{L^\infty} \left( M_{p-1}(t) M_1(t) + 2 M_p(t) \right).
     \end{align*}
     Using Jensen's inequality to obtain $M_{p-1} M_1 \le M_p$, we have $\dot{M_p}(t) \le 3p \|\nab^2\g\|_{L^\infty } M_p(t)$, and the result follows by Gr\"onwall's lemma.

    The finite speed of propagation in the velocity variable is obtained by considering a characteristic curve $(\Theta(s), V(s))$, whose velocity in $v$ satisfies:
    \begin{align*}
        \dot{V}(s) &= -(\nab^2\g V)\ast( f-\mu) (s,\Theta, V) \\
        &= - \nab^2\g \ast (\rho - \mu )(s,\Theta) V(s) + \int_{\R^d\times \R^{d\times r}} \nab^2\g (\Theta - \Theta') V' f(s,\Theta', V') \, d\Theta'dV'. 
    \end{align*}
    Since $M_1$ is bounded on $[0,T]$, we have $|\dot{V}(s)| \le C |V(s)| + C$, where $C$ depends on $\|\nab^2 \g\|_\infty$ and the first moment. Gr\"onwall's lemma then implies $|V(t)| \le (|V(0)| + Ct) e^{Ct}$, so that the support remains compact for all $t\in [0,T]$.

    Finally, a direct calculation shows that the time derivative of the free energy satisfies:
    \begin{align*}
        \frac{d}{dt}\mathcal{F}(f(t)) = - \int \left| \nab\g\ast (\rho - \mu) + \hal \nab_\theta (\nab^2\g : v\otimes v) \ast (f-\mu) \right|^2 df - \int | (\nab^2\g \,  v ) \ast (f-\mu) |^2 \, df.
    \end{align*}
    Since both terms are non-positive, $\mathcal{F}$ is non-increasing.
\end{proof}

\begin{prop}[Uniqueness]
    Let $f_0 \in \mathcal{P}_2(\R^d \times \R^{d \times r})$ be a probability measure with compact support in the $v$ variable. There exists at most one solution $f \in C([0, T], \mathcal{P}_2(\R^d \times \R^{d \times r}))$ to \eqref{eq:FlowFormulation} which maintains compact support in $v$ for all $t \in [0, T]$.
\end{prop}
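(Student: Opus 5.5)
Take two solutions $f_1,f_2\in C([0,T],\mathcal{P}_2)$ of \eqref{eq:FlowFormulation} with the same datum $f_0$, both compactly supported in $v$ on $[0,T]$. The plan is a Dobrushin-type stability estimate: couple the two solutions through their own flows applied to the same initial points, and close a Gr\"onwall inequality on the $W_2$ distance.

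\emph{Step 1 (uniform bounds).} By continuity in time and compact $v$-support at each time, together with the finite speed of propagation and moment propagation of \cref{prop:estimates}, we fix $R>0$ such that $\supp_v f_i(t)\subset B_R(0)$ for all $t\in[0,T]$ and $i=1,2$. Hence $M_2(f_i(t)),M_4(f_i(t))\le R^4+R^2$ uniformly on $[0,T]$. By \cref{lem:Lip} with $n=0$, $p\le1$, each velocity field $\V(\cdot;f_i)$ is then Lipschitz in $(\theta,v)$ on $\R^d\times B_R$, with a constant $L$ depending only on $R$ and $\|\g\|_{C^{3,1}}$. Consequently the flows $Z_i=Z(\cdot;f_i)$ are well defined for $f_0$-a.e.\ initial point, and $f_i(t)=Z_i(t)\#f_0$.

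\emph{Step 2 (coupling and Gr\"onwall).} Define
\[
D(t):=\int |Z_1(t,z)-Z_2(t,z)|^2\,df_0(z).
\]
Since $(Z_1(t),Z_2(t))\#f_0$ is a coupling of $f_1(t)$ and $f_2(t)$, we have $W_2(f_1(t),f_2(t))^2\le D(t)$. Differentiate $D$ and split the difference of velocities as
\[
\V(Z_1;f_1)-\V(Z_2;f_2)=\big[\V(Z_1;f_1)-\V(Z_2;f_1)\big]+\big[\V(Z_2;f_1)-\V(Z_2;f_2)\big].
\]
The first bracket is bounded by $L|Z_1-Z_2|$ using Step 1; note that $|V_i|\le R$ along the relevant trajectories. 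The second bracket is bounded via \cref{lem:measureStability} by $C_R\,W_2(f_1,f_2)\le C_R\sqrt{D}$, with constants uniform thanks to the moment bounds of Step 1. Cauchy--Schwarz then gives $\dot D\le C D$ with $D(0)=0$, so $D\equiv0$ and $f_1=f_2$ on $[0,T]$.

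\emph{Main obstacle.} The delicate point is making Step 1 rigorous. The solutions are only assumed continuous in $\mathcal{P}_2$ with compact $v$-support at each time, so a uniform-in-time radius $R$ must be justified. I would obtain it from the characteristic bound $|\dot V|\le C|V|+C$ of \cref{prop:estimates}(iii); this is legitimate because the $M_1$ appearing there is itself controlled by (ii). A second, minor concern is the $\theta$-unboundedness of the domain. It is harmless, since all estimates in \cref{lem:Lip} and \cref{lem:measureStability} are uniform in $\theta$, and only the $v$-variable requires localization.
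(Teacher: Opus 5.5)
Your proposal is correct and follows the paper's proof essentially step for step. Both arguments couple $f_1(t),f_2(t)$ through $(Z_1(t),Z_2(t))\#f_0$, split the velocity difference into a part that is Lipschitz in $z$ (\cref{lem:Lip}) and a part that is stable in the measure (\cref{lem:measureStability}), and close Gr\"onwall on $\int|Z_1-Z_2|^2\,df_0$; you insert the intermediate point $\V(Z_2;f_1)$ instead of the paper's $\V(Z_1;f_2)$, which makes no difference. The one place you go beyond the paper is in justifying a time-uniform radius $R$ through the characteristic bound $|\dot V|\le C(|V|+M_1)$, which the paper simply asserts from the compact-support hypothesis; note that $\sup_{t\le T}M_1(f(t))<\infty$ already follows from $f\in C([0,T],\mathcal{P}_2)$, so you do not need the moment-propagation estimate, which was derived for PDE solutions.
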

\begin{proof}
    Let $f_1, f_2$ be two such solutions starting from $f_0$. Let $Z_1, Z_2$ be their respective characteristic flows defined by \eqref{eq:characteristics}. Since $f_i(t) = Z_i(t, \cdot) \# f_0$, the measure $\pi_t := (Z_1(t, \cdot), Z_2(t, \cdot)) \# f_0$ is a valid coupling between $f_1(t)$ and $f_2(t)$. By definition of the Wasserstein distance as an infimum over couplings, we have
    \begin{equation*}
        W_2^2(f_1(t), f_2(t)) \le  \int_{\R^d \times \R^{d \times r}} |Z_1(t, z) - Z_2(t, z)|^2 df_0(z).
    \end{equation*}
    where $z = (\theta, v)$. The difference in trajectories satisfies
    \begin{align*}
        \frac{d}{dt} |Z_1 - Z_2| &\le |\mathcal{V}(t, Z_1; f_1) - \mathcal{V}(t, Z_2; f_2)| \\
        &\le |\mathcal{V}(t, Z_1; f_1) - \mathcal{V}(t, Z_1; f_2)| + |\mathcal{V}(t, Z_1; f_2) - \mathcal{V}(t, Z_2; f_2)|.
    \end{align*}
    By the compact support assumption, there exists $R > 0$ such that $\supp_v f_1(t) \cup \supp_v f_2(t) \subset B_R$ for all $t \in [0, T]$. 
    Using \cref{lem:measureStability}, the first term is bounded by $C_R W_2(f_1, f_2)$, while
    the second term is bounded by $C_R |Z_1 - Z_2|$ thanks to \cref{lem:Lip}
    Combining these, we have
    \begin{align*}
        \frac{d}{dt} \int |Z_1 - Z_2|^2 \, df_0 &= 2 \int (Z_1 - Z_2) \cdot \frac{d}{dt}(Z_1 - Z_2) \, df_0 \\
        &\le 2C_R \int |Z_1 - Z_2| ( |Z_1 - Z_2| +  W_2(f_1, f_2)) \, df_0 \\
        &\le 4C_R \int |Z_1 - Z_2|^2 \, df_0 + C_R W_2^2(f_1, f_2).
    \end{align*}
    where we used $ab\le a^2/2 + b^2/2$. We conclude by Gr\"onwall's lemma that $W_2(f_1(t), f_2(t)) = 0$ for all $t \in [0, T]$, which implies uniqueness. 
\end{proof}

\subsection{Propagation of higher regularity}

The qualitative propagation of higher regularity is also standard for such transport equations with smooth velocity fields. For the sake of completeness, we provide the following quantitative estimate on the regularity of the solution $f$ in terms of the regularity of the initial condition $f_0$ and the interaction kernel $\g$.

\begin{prop}[Propagation of higher regularity]\label{prop:regularity}
    Let $n, m \ge 0$ and $q \in [1, \infty]$. Assume $\g \in C^{n+2,1}_b(\R^d)$ and that $f_0 \in \mathcal{P}_2(\R^d\times \R^{d\times r})$ has a density $f_0(\theta, v)$ such that $\nabla_\theta^\alpha \nabla_v^\beta f_0 \in L^q(\R^d\times \R^{d\times r})$ for all $|\alpha| \le n, |\beta| \le m$. Assume also that $f_0$ is compactly supported in the $v$ variable. Let $f \in C([0,T], \mathcal{P}_2)$ be the unique solution to \eqref{eq:FlowFormulation}. Then, for each $t \in [0,T]$, $f(t, \cdot)$ has a density $f(t, \cdot, \cdot)$ satisfying $\nabla_\theta^\alpha \nabla_v^\beta f(t, \cdot, \cdot) \in L^q(\R^d\times \R^{d\times r})$ for all $|\alpha| \le n, |\beta| \le m$. Furthermore, there exists a constant $C > 0$ such that
    \begin{equation*}
        \sum_{|\alpha| \le n, |\beta| \le m} \|\nabla_\theta^\alpha \nabla_v^\beta f(t)\|_{L^q} \le \bigg( \sum_{|\alpha| \le n, |\beta| \le m} \|\nabla_\theta^\alpha \nabla_v^\beta f_0\|_{L^q} \bigg) \exp(C t).
    \end{equation*}
\end{prop}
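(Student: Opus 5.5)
We differentiate the equation in $(\theta,v)$ and close a Grönwall estimate on $Y(t):=\sum_{|\alpha|\le n,|\beta|\le m}\|\nabla_\theta^\alpha\nabla_v^\beta f(t)\|_{L^q}$. First we fix the set where everything lives. By \cref{prop:estimates}(iii) there is $R_T$ with $\supp_v f(t)\subset B_{R_T}$ for all $t\in[0,T]$, and by \cref{prop:estimates}(ii) $M_2(f(t))$ is bounded on $[0,T]$. Then \cref{lem:Lip}, applied with $R=R_T$, bounds uniformly on $[0,T]\times\R^d\times B_{R_T}$ all derivatives $\nabla_\theta^{a}\nabla_v^{b}\mathcal V$ with $a\le n$ (one more $\theta$-derivative for $\mathcal V_v$) and every $b$. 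The $v$-derivatives are polynomial in $v$ of degree at most $2$, so $m$ plays no role. Note that $n+1$ derivatives of $\mathcal V$ are needed below; since $\g\in C^{n+2,1}$, the $(n+1)$-st $\theta$-derivative of the $\nabla^3\g$ term in $\mathcal V_\theta$ only exists as a Lipschitz/$W^{1,\infty}$ bound. This is enough for a.e.\ differentiation, so we work with $\g$ mollified and pass to the limit at the end.

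\emph{Existence of a density.} The flow $Z(t,\cdot)$ of \eqref{eq:characteristics} is a $C^1$ diffeomorphism, and we have the representation
\[
f(t,z)=f_0\big(Z^{-1}(t,z)\big)\,\big|\det \nabla Z^{-1}(t,z)\big|,
\]
so $f(t)$ has a density and lies in $L^q$ with $\|f(t)\|_{L^q}\le e^{Ct}\|f_0\|_{L^q}$. This follows from $\frac{d}{dt}\log\det\nabla Z=(\div\mathcal V)\circ Z$, where $\div\mathcal V=\div_\theta\mathcal V_\theta+\div_v\mathcal V_v$ is bounded on the support. It gives the case $n=m=0$.

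\emph{Differentiated equation.} Write the equation as $\partial_t f+\mathcal V\cdot\nabla f+(\div\mathcal V)f=0$. Applying $\partial^\gamma=\nabla_\theta^\alpha\nabla_v^\beta$ and using Leibniz,
\[
\partial_t\partial^\gamma f+\mathcal V\cdot\nabla\partial^\gamma f=-\sum_{0<\gamma'\le\gamma}\binom{\gamma}{\gamma'}\partial^{\gamma'}\mathcal V\cdot\nabla\partial^{\gamma-\gamma'}f-\sum_{\gamma'\le\gamma}\binom{\gamma}{\gamma'}\partial^{\gamma'}(\div\mathcal V)\,\partial^{\gamma-\gamma'}f.
\]

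\emph{Main obstacle: the commutator term.} The point needing care is $\partial^{\gamma'}\mathcal V\cdot\nabla\partial^{\gamma-\gamma'}f$ with $|\gamma'|=1$. It is of the same order as $\partial^\gamma f$, but it mixes the index classes: a $\theta$-derivative of $\mathcal V_v$ multiplies $\nabla_v\partial^{\gamma-\gamma'}f$, which raises the $v$-order by one. The fix is to sum over the whole family $|\alpha|\le n,|\beta|\le m$ and treat it as a closed system, bounding by $\|\nabla^{n+m}f\|$ in all mixed indices. Equivalently, one uses the full-gradient norm up to order $n+m$ restricted to the admissible set. I expect this to close because $\mathcal V_v$ is affine in $v$ and its $\theta$-derivatives come with the extra regularity provided by \cref{lem:Lip}. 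If the closure fails, I would instead use the cruder scale $\sum_{|\alpha|+|\beta|\le n}$.

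\emph{$L^q$ estimate and conclusion.} Multiply by $|\partial^\gamma f|^{q-2}\partial^\gamma f$ and integrate. The transport term contributes $\frac{1}{q}\int(\div\mathcal V)|\partial^\gamma f|^q$, which is bounded by $C\|\partial^\gamma f\|_q^q$. Hölder bounds all remaining terms by $C\,Y(t)\,\|\partial^\gamma f\|_q^{q-1}$, using the sup bounds on derivatives of $\mathcal V$ on the support. This yields $\frac{d}{dt}Y\le CY$ with $C=C(T,R_T,\|\g\|_{C^{n+2,1}},M_2(0))$. For $q=\infty$ we either pass to the limit $q\to\infty$ or argue along characteristics directly. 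Grönwall then gives the stated bound. Finally, we justify the computation by mollifying $f_0$ and $\g$ and passing to the limit, using \cref{lem:measureStability} for the stability of the solution and the lower semicontinuity of $L^q$ norms under weak convergence.
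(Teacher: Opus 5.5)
The gap is exactly the step you single out as the main obstacle, and it cannot be closed: the anisotropic family $\{|\alpha|\le n,\ |\beta|\le m\}$ is not invariant under this flow. Write $\mathcal V=-\nabla_{(\theta,v)}\Phi$ with $\Phi=\g\ast(\rho-\mu)+\frac12(\nabla^2\g:v\otimes v)\ast(f-\mu)$. Both off-diagonal Jacobian blocks $\partial_v\mathcal V_\theta$ and $\partial_\theta\mathcal V_v$ equal $-\nabla_\theta(\nabla^2\g\,v)\ast(f-\mu)$, which is generically nonzero.

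In your Leibniz expansion, a term where one $v$-derivative falls on $\mathcal V$ contains $(\partial_v\mathcal V_\theta)\cdot\nabla_\theta(\cdot)$ and puts $|\alpha|+1$ $\theta$-derivatives on $f$. A term where one $\theta$-derivative falls on $\mathcal V$ contains $(\partial_\theta\mathcal V_v)\cdot\nabla_v(\cdot)$ and puts $|\beta|+1$ $v$-derivatives on $f$. At $|\alpha|=n$ or $|\beta|=m$ these leave the family. The sup bounds of Lemma~\ref{lem:Lip} and the affine structure of $\mathcal V_v$ do not help, because the obstruction is which derivatives of $f$ appear, not how large their coefficients are.

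Your proposed fix is circular. The full-gradient norm ``restricted to the admissible set'' is just the original family. The genuine $W^{n+m,q}$ norm would require $f_0\in W^{n+m,q}$ and $n+m+1$ $\theta$-derivatives of $\mathcal V_\theta$, neither of which is assumed. In fact the anisotropic claim is false in general, even for smooth $\g$. For $(n,m)=(0,1)$, take $f_0=h(\theta)k(v)$ with $k$ smooth and compactly supported and $h$ a normalized indicator of a ball. Since $\Theta^{-1}(t,\theta,v)$ depends on $v$, the jump set of $f(t)=(f_0/J)\circ Z^{-1}$ tilts in the $v$-directions, and $\nabla_v f(t)$ acquires a singular part. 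The paper's Lagrangian proof hides the same problem in its chain rule, since $\partial_v(f_0\circ Z^{-1})$ contains $\nabla_\theta f_0\cdot\partial_v\Theta^{-1}$. What is actually propagated is the isotropic scale $\sum_{|\alpha|+|\beta|\le k}$, i.e.\ your fallback, but that is a different statement.

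There is a second, independent gap in the derivative count on $\g$. Because $\mathcal V_\theta$ already contains $\nabla^3\g$, the Leibniz term $(\nabla_\theta^\alpha\div\mathcal V)\,\nabla_v^\beta f$ with $|\alpha|=n$ contains $\nabla^{n+4}\g$. But $\g\in C^{n+2,1}$ only gives $\nabla^{n+3}\g\in L^\infty$. So the quantity you need is not ``Lipschitz/$W^{1,\infty}$''; it is not a bounded function at all. After mollification your Gr\"onwall constant depends on $\|\nabla^{n+4}\g_\varepsilon\|_{L^\infty}$, which blows up as $\varepsilon\to0$, so passing to the limit yields nothing.

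This is not just a weakness of the method. In one dimension the Lipschitz field $\mathcal V(x)=|x|$ turns data that are constant near $0$ into a jump, because $f=(f_0/J)\circ Z^{-1}$ and $J$ has one derivative fewer than $Z$. The paper's proof suffers the same loss: it records $J^{-1}$ as only $C^{n-1}$ in $\theta$, yet concludes $n$ derivatives for $f$. Also, Lemma~\ref{lem:measureStability} is stability in $f$, not in $\g$, so mollifying the kernel needs a separate, routine estimate.

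Under corrected hypotheses (the isotropic family $|\alpha|+|\beta|\le k$ and $\g\in C^{k+3,1}_b$), your Eulerian $L^q$--Gr\"onwall argument does close. It then derives the $e^{Ct}$ bound more transparently than the paper's pushforward-and-chain-rule sketch, which asserts that bound rather than proving it.
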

\begin{proof}
    The solution $f$ is given by $f(t) = Z(t, \cdot) \# f_0$, where $Z(t, \cdot)$ is the flow associated with the velocity field $\mathcal{V} = (\mathcal{V}_\theta, \mathcal{V}_v)$ defined in \eqref{eq:characteristics}. From \cref{lem:Lip}, if $\g \in C^{n+2,1}_b$, then for any $p \ge 0$, $\nabla_\theta^n \nabla_v^p \mathcal{V}_\theta$ and $\nabla_\theta^{n+1} \nabla_v^p \mathcal{V}_v$ are bounded uniformly on $\R^d \times B_R$. Since $f_0$ is compactly supported in $v$ and that this is propagated (\cref{prop:estimates}), the support in $v$ remains compact for $t \in [0,T]$. Consequently, the flow $Z(t, \cdot)$ is a $C^n$-diffeomorphism in $\theta$ and $C^\infty$ in $v$. More specifically, the components $(\Theta, V)$ of the flow satisfy the system of ODEs:
    \begin{equation*}
        \dot{\Theta} = \mathcal{V}_\theta(\Theta, V), \quad \dot{V} = \mathcal{V}_v(\Theta, V).
    \end{equation*}
    By differentiating with respect to the initial conditions $z_0 = (\theta_0, v_0)$, we obtain that the Jacobian matrix $D_{z_0} Z(t, z_0)$ satisfies a linear ODE whose coefficients are derivatives of $\mathcal{V}$. Since $\mathcal{V}$ is $C^n$ in $\theta$ and $C^\infty$ in $v$, and $V$ is at most quadratic in $v$, the flow inherits $C^n$ regularity in $\theta_0$ and $C^\infty$ regularity in $v_0$. Specifically, the partial derivatives $\nabla_\theta^\alpha \nabla_v^\beta Z$ are bounded for $|\alpha| \le n$ and any $\beta$.

    We now use the duality formulation. Consider $\phi \in C^\infty_c(\R^d\times \R^{d\times r})$. For $|\alpha| \le n$ and $|\beta| \le m$, we have:
    \begin{align*}
        | \langle f(t), \nabla_\theta^\alpha \nabla_v^\beta \phi \rangle | &= \bigg\vert \int_{\R^d\times \R^{d\times r}} \nabla_\theta^\alpha \nabla_v^\beta \phi(Z(t, z)) f_0(z) \, dz \bigg\vert \\
        &= \bigg\vert \int_{\R^d\times \R^{d\times r}} \phi(z') \partial_\theta^\alpha \partial_v^\beta \big( f_0(Z^{-1}(t, z')) J(t, Z^{-1}(t, z'))^{-1} \big) \, dz' \bigg\vert,
    \end{align*}
    where $J(t, z) = \det(\nabla_z Z(t, z))$. The Jacobian $J$ satisfies $\partial_t J = (\div \mathcal{V} \circ Z) J$. From \cref{lem:Lip}, $\div \mathcal{V} = \div_\theta \mathcal{V}_\theta + \div_v \mathcal{V}_v$ is $C^{n-1}$ in $\theta$ and $C^\infty$ in $v$ (in fact, $\div_v \mathcal{V}_v$ is independent of $v$). Thus $J$ and its inverse are $C^{n-1}$ in $\theta$ and $C^\infty$ in $v$. 
    
    The term inside the integral is a sum of products involving derivatives of $f_0$ (up to order $n$ in $\theta$ and $m$ in $v$), derivatives of $Z^{-1}$, and $J^{-1}$. Since $Z^{-1}$ inherits the regularity of the flow, it is $C^n$ in $\theta$ and $C^\infty$ in $v$. Applying the chain rule, we obtain that $\nabla_\theta^\alpha \nabla_v^\beta f(t)$ is in $L^q(\R^d\times \R^{d\times r})$ for $|\alpha| \le n$ and $|\beta| \le m$. The exponential growth bound follows from Gr\"onwall's inequality applied to the $L^q$ norms of the derivatives, using the $L^\infty$ bounds on the derivatives of the velocity field $\mathcal{V}$.
\end{proof}

\end{document}